\documentclass[11pt]{amsart}%
\usepackage{graphicx}
\usepackage{amscd}
\usepackage{amsmath}
\usepackage{amsfonts}
\usepackage{xcolor}
\usepackage{amssymb}%
\providecommand{\U}[1]{\protect\rule{.1in}{.1in}}
\newtheorem{theorem}{Theorem}
\theoremstyle{plain}

\newtheorem{definition}{Definition}[section]

\newtheorem{remark}{Remark}[section]

\numberwithin{equation}{section}
\numberwithin{theorem}{section}

\begin{document}
\title[A New Characterization of Integral Powers of the Friedrichs-Legendre Operator]{A New Characterization of the Domains of Integral Powers of the Self-Adjoint Friedrichs-Legendre Operator}
\author{Lance L. Littlejohn}
\address{Department of Mathematics, Baylor University, Waco, TX 76706.}
\email{lance\_littlejohn@baylor.edu}
\author{Richard Wellman}
\address{Lucidy Sciences, 7984 S 1300 E, Sandy, UT 84094}
\email{wellman.richard@gmail.com}
\author{Quinn Wicks}
\address{Northern Virginia Community College, Manassas Campus MTS 148, 10950 Campus Dr,
Manassas, VA 20109}
\email{ewicks@nvcc.edu}
\subjclass{Primary 33C65, 34B30, 47B25; Secondary 34B20, 47B65. }
\keywords{spectral theorem, Glazman-Krein-Naimark theorem, left-definite Hilbert space, Legendre polynomials, Legendre-Stirling numbers, Gegenbauer polynomials}
\dedicatory{ }
\date{July 17, 2026}
\begin{abstract}
Let $A$ be the self-adjoint operator in $L^{2}(-1,1)$, generated by the
second-order classical Legendre differential equation%
\[
\ell\lbrack y](t)=-\left(  (1-t^{2})y^{\prime}(t)\right)  ^{\prime}+ky(t)=\lambda
y(t)\quad(t\in(-1,1)),
\]
which has the Legendre polynomials $\{P_{m}\}_{m=0}^{\infty}$ as a complete sequence of eigenfunctions; here $k$ is a fixed, non-negative real number. This is the Friedrichs extension of the minimal operator associated with $\ell[\cdot]$ in $L^2(-1,1)$. For each $n \in \mathbb{N}$, we show that $\mathcal{D}(A^{n})$ is characterized by \textit{one} integrability condition instead of $2n$ boundary conditions as dictated by the classical Glazman-Krein-Naimark theory. We also prove that if $f\in\mathcal{D}(A^{n})$ then $f^{(n)}\in L^{2}(-1,1).$ This smoothness result extends known results when $n=1$ and $n=2.$ Furthermore, this result is optimal in the sense that there exists $g\in\mathcal{D}(A^{n})$ with $g^{(n+1)}\notin L^{2}(-1,1)$.  
\end{abstract}
\maketitle
\tableofcontents

\section{Introduction\label{Introduction}}

A natural setting for the study of the classical second-order Legendre expression
\begin{equation}
\begin{split}
\ell\lbrack y](t)  &  =-(1-t^{2})y^{\prime\prime}(t)+2ty^{\prime
}(t)+ky(t)\quad(t\in(-1,1))\\
&  =-\left(  (1-t^{2})y^{\prime}(t)\right)  ^{\prime}+ky(t)\quad(k\geq0\text{
is fixed)}
\label{Legendre DE}
\end{split}
\end{equation}
is the space $L^{2}(-1,1)$ with the usual inner product $(\cdot,\cdot)_{L^2(-1,1)}$ defined by
\[
(f,g)_{L^2(-1,1)}=\int_{-1}^{1}f(t)\overline{g}(t)\:dt.
\] 
This space is called the right-definite setting for the study of $\ell[\cdot]$. Historically it was Titchmarsh in the 1940's (see
\cite{Titchmarsh1} and \cite{Titchmarsh}) who first studied analytic
properties of (\ref{Legendre DE}) in $L^{2}(-1,1)$. In particular, Titchmarsh showed that the Legendre polynomials $\{P_{m}\}_{m=0}^{\infty}$ are
eigenfunctions of a self-adjoint operator $A,$ generated by $\ell\lbrack
\cdot],$ in $L^{2}(-1,1).$ Explicitly, this operator $A:\mathcal{D}(A)\subset
L^{2}(-1,1)\rightarrow L^{2}(-1,1)$ is given by%
\begin{align}
\begin{split}
(Af)(t)  &  =\ell\lbrack f](t)\quad(\text{a.e. }t\in(-1,1))\\
f\in\mathcal{D}(A)  &  =\{f:(-1,1)\rightarrow \mathbb{C}\mid f,f^{\prime }\in
AC_{\mathrm{loc}}(-1,1),f,\ell \lbrack f]\in L^{2}(-1,1);\\
&\hspace{3.5cm} \lim_{t\rightarrow \pm 1}(1-t^{2})f^{\prime }(t)=0\}.\end{split}
\label{Operator A}
\end{align}
In \cite{Everitt1980}, Everitt  
discussed the Legendre expression from both a right-definite and a (first)
left-definite point of view. The setting for a (first) left-definite study of
(\ref{Legendre DE}) is a certain Sobolev space $H_{1}$ with inner product (when $k>0$)%
\[
(f,g)_{1}:=\int_{-1}^{1}\left(  (1-t^{2})f^{\prime}(t)\overline{g}^{\prime
}(t)+kf(t)\overline{g}(t)\right)  dt.
\]
Other detailed analytical studies of the Legendre expression can be found in \cite{Everitt-Littlejohn-Wellman-Legendre2002}, \cite{Littlejohn-Zettl2011}, and in the Ph.D. theses of
Loveland \cite{Loveland} and Wicks \cite{Wicks}.

There has been a resurgence of interest in the past fifteen years in the analysis of
differential equations having orthogonal polynomial
eigenfunctions, mainly due to the emergence of exceptional orthogonal polynomials. This subject was introduced
in a seminal paper \cite{GUKM2009} in 2009 by D. G\'{o}mez-Ullate, N. Kamran, and R. Milson. Generally speaking, exceptional orthogonal polynomials form an
algebraically incomplete but analytically complete orthogonal sequence in
certain Hilbert spaces. Moreover, they are eigen-solutions of certain spectral-type
second-order differential equations with rational coefficients. Further information connecting the subjects of exceptional orthogonal polynomials and spectral theory of second-order differential operators may be found in \cite{GFGUM2019}, \cite{GUGM2021}, and \cite{LLMS2016}. 

In \cite{ELM-Legendre}, Everitt, Littlejohn, and Mari\'{c} prove that if $f \in \mathcal{D}(A)$, where $A$ and $\mathcal{D}(A)$ are given in (\ref{Operator A}), then $f^{\prime}\in L^{2}(-1,1)$. Hence, by appropriately redefining $f$ at the endpoints $t=\pm1$, if necessary, we see that $f \in AC[-1,1]$. Moreover, they show that there exists functions $f \in \mathcal{D}(A)$ such that $f'' \notin L^2(-1,1)$. This smoothness result was unexpected. Indeed, Weyl's limit-point/limit-circle theory governs how many solutions of a singular ordinary differential operator are $L^2$ integrable near the singularities but there are no general results available to determine the degree of differentiability of functions in the domain of such an operator. It is true that functions $f$ in the domain necessarily must be locally absolutely continuous (so $f'$ necessarily exists almost everywhere) but, because of possible blowup of $f$ at a singular endpoint, smoothness of functions on the entire interval should not be expected.

In \cite{Littlejohn-Wicks} and \cite{Wicks}, it is shown that if $f\in\mathcal{D}(A^{2}),$ then $f^{\prime\prime}\in L^{2}(-1,1)$ and, thus, $f,f^{\prime}\in AC[-1,1]$. However, as $n$ increases, the standard method for extending these results to $\mathcal{D}(A^n)$ would involve an indepth analysis of the sesquilinear form $[\cdot,\cdot]_{2n}$ (see (\ref{symplectic2n})) obtained from Green's formula for the $n^{th}$ integral power $\ell
^{n}[\cdot]$ of the Legendre expression (see (\ref{nth_power})-(\ref{Legendre-Stirling numbers})) and $2n$ appropriately chosen boundary conditions. As one might expect, as $n$ increases,
the sesquilinear form $[\cdot,\cdot]_{2n}$ becomes quite unwieldy.

The general left-definite theory developed by Littlejohn and Wellman (see \cite{FGHL2025}, \cite{Littlejohn-Wellman-LD-2002}, and \cite{Littlejohn-WellmanSpectra}) circumvents this complexity problem. In
\cite{Everitt-Littlejohn-Wellman-Legendre2002}, Everitt, Littlejohn, and Wellman, using results from this left-definite theory, characterize the domain of $\mathcal{D}(A^{n})$ in terms of \textit{one} integrability condition instead of, traditionally,
$2n$ appropriate  boundary conditions as dictated by the Glazman-Krein-Naimark (GKN) theory of self-adjoint extensions of symmetric ordinary differential operators. Specifically, they prove that 
\begin{align}
\mathcal{D}(A^n)=\{f:(-1,1)\rightarrow\mathbb{C}\mid f,f^{\prime},\ldots,&f^{(2n-1)}\in AC_{\textrm{loc}}(-1,1); \label{Domain(A^n)} \\ 
&(1-t^2)^{n}f^{(2n)}\in L^{2}(-1,1)\}.\notag
\end{align}
From this characterization of $\mathcal{D}(A^n)$, we prove the smoothness condition 
\begin{equation}
f\in\mathcal{D}(A^{n})\Longrightarrow f^{(n)}\in L^{2}(-1,1);
\label{D(A^n) Property}%
\end{equation}
(see Theorem \ref{Main Theorem}).
Of course, (\ref{D(A^n) Property}) implies that $f,f^{\prime}%
,\ldots,f^{(n-1)}\in AC[-1,1].$ This result generalizes the special cases $n=1$ and $n=2$. Moreover, as we will see, this result is also best possible. 

The contents of this paper are as follows. Section \ref{GKN theory} gives a brief account of the GKN theory of self-adjoint extensions of Lagrangian symmetrizable differential operators in a Hilbert space culminating in the statement of the GKN Theorem. In Section \ref{Right-Definite Theory - Legendre}, we review the classical right-definite theory in $L^{2}(-1,1)$ of the Legendre expression $\ell \lbrack\cdot]$ and the operator $A$ which has the Legendre polynomials $\{P_{m}\}_{m=0}^{\infty}$ as a complete set of eigenfunctions. In particular, we show that $A$ is bounded below in $L^{2}(-1,1)$ by $kI$. This feature of $A$ is important in order to apply the left-definite machinery developed in \cite{Littlejohn-Wellman-LD-2002, Littlejohn-WellmanSpectra} and later in \cite{FGHL2025}. The combinatorics of integral powers of the Legendre expression $\ell[\cdot]$ are discussed in Section \ref{Powers}; these results were also first established in \cite{Everitt-Littlejohn-Wellman-Legendre2002}. An interesting set of numbers called the Legendre-Stirling numbers appear as coefficients in the Lagrangian symmetric form of $\ell^n[\cdot]$; these numbers mimic properties of the classical Stirling numbers of the second kind. In Section \ref{A^n GKN}, we apply the GKN theory to obtain the classical characterization of the domain of $A^n$; this domain is determined by $2n$ appropriately chosen boundary conditions. Section \ref{Brief LD Theory} gives a brief survey of abstract left-definite theory developed in \cite{FGHL2025, Littlejohn-Wellman-LD-2002, Littlejohn-WellmanSpectra}. In Section \ref{Left-Definite Legendre Results}, we apply this left-definite theory to give the characterization (\ref{Domain(A^n)}) of $\mathcal{D}(A^{n})$. Full details of these results were first reported in \cite{Everitt-Littlejohn-Wellman-Legendre2002}. A key result in \cite{Everitt-Littlejohn-Wellman-Legendre2002}, as far as this manuscript is concerned, is the simple characterization of $\mathcal{D}(A^n)$ given in (\ref{Domain(A^n)}). Section \ref{Integral Inequality} discusses an important integral inequality (see Theorem \ref{Integral Inequality Theorem}) which is key to the two main results of this paper. In Section \ref{Main Results}, we present our two main results. Specifically, we give a new, simplified proof of (\ref{Domain(A^n)}) using Theorem \ref{Integral Inequality Theorem}. We then use this characterization to prove the implication in (\ref{D(A^n) Property}). Furthermore, we show that this smoothness condition is optimal in the sense that there exists $g\in\mathcal{D}(A^{n})$ with $g^{(n+1)}\notin L^{2}(-1,1).$ In Section \ref{Concluding Remarks}, we summarize our main results and emphasize how the general left-definite theory yields more efficient results than the classical methods presented by the GKN theory. 

Throughout this paper, $\mathbb{N}$ will denote the set of positive integers,
$\mathbb{N}_0=\mathbb{N\,\cup}\{0\},$ while $\mathbb{R}$ and $\mathbb{C}$
will denote, respectively, the real and complex number fields. For $m \in \mathbb{N}$, the set 
$AC_{\mathrm{loc}}^{(m)}(I)$ will denote those functions $f:I\rightarrow\mathbb{C}$
with $f,f',\cdots, f^{(m)}$ being absolutely continuous on all compact subintervals of an interval
$I\subset\mathbb{R}$; of course, in this case, the $(m+1)^{st}$ derivative $f^{(m+1)}$ exists almost everywhere (a.e.) on $I$. The condition $f\in AC[a,b]$ means that $f$ is
absolutely continuous on the compact interval $[a,b].$ If $w(t)>0$ (a.e. $t\in
I)$ is Lebesgue measurable, then $L^{2}(I;w)$ will denote the weighted Hilbert space of
all Lebesgue measurable functions $f:I\rightarrow\mathbb{C}$ satisfying
$\int_{I}\left\vert f(t)\right\vert ^{2}w(t)dt<\infty.$ Further notation is
introduced as needed throughout the paper.

\section{A Primer on GKN Theory}\label{GKN theory}

In this section, we give a short survey of the Glazman-Krein-Naimark (GKN) theory of self-adjoint extensions of Lagrangian symmetrizable ordinary differential equations. For a comprehensive study of this theory, we recommend the texts of Akhiezer and Glazman \cite[Chapter 8]%
{Akhiezer-Glazman}, Naimark \cite[Chapter V]{Naimark}, and Zettl \cite[Chapter 10]{Zettl-book}. For an overall comprehensive study of the spectral theory of Sturm-Liouville problems, the text of Gesztesy, Nichols, and Zinchenko \cite{GNZ} is an excellent source.

Suppose $m[\cdot]$ is the $2n^{th}$-order Lagrangian symmetrizable differentiable expression
\begin{equation}
m[y](t)=(1/w(t))\sum_{j=0}^n(-1)^{j}(a_{j}(t)y^{(j)}(t))^{(j)} \quad  (a.e.\thinspace t \in (a,b)),
\label{m expression}
\end{equation}
where, for simplicity (and our purposes), we assume each $a_{j} \in C^{j}(a,b)$ is real-valued and $w(t)$ is Lebesgue measurable and positive a.e. $t \in (a,b)$. In \cite{Naimark}, we note that more general conditions on the coefficients - namely quasi-differentiability - are assumed. 

The maximal domain for $m[\cdot]$ in $L^2((a,b);w)$ is defined to be
\begin{equation*}
\mathcal{D}(T_{\textrm{max}})=\left\{ f:(a,b) \rightarrow \mathbf{C}\vert f \in AC_{\textrm{loc}}^{(2n-1)}(a,b);
	f,m[f] \in L^2((a,b);w)\right\},
\end{equation*}
and the maximal operator $T_{\textrm{max}}: L^2((a,b);w) \rightarrow L^2((a,b);w)$ is given by
\[
T_{\textrm{max}}f  =m[f] \quad(f \in \mathcal{D}(T_{\textrm{max}})).
\]
The maximal operator is aptly named since $T_{\textrm{max}}$ maps the largest possible subspace of $L^{2}((a,b);w)$ into $L^{2}((a,b);w)$. Consequently, it is natural to call the adjoint $T_{\textrm{max}}^{\star}$ the minimal operator and it is denoted by $T_{\textrm{min}}$. Moreover $(T_{\textrm{min}})^{*}=T_{\textrm{max}}$.

The operator $T_{\textrm{min}}$ is a closed, symmetric operator in $L^2((a,b);w)$ and is specifically given by
\[
T_{\textrm{min}}f  =m[f] \quad (f \in \mathcal{D}(T_{\textrm{min}})),
\]
where
\[
\mathcal{D}(T_{\textrm{min}})=\{ f \in \mathcal{D}(T_{\textrm{max}}) \vert [f,g](b)-[f,g](a)=0\} \quad (g \in \mathcal{D}(T_{\textrm{max}})),
\]
and $[\cdot,\cdot]$ is the symplectic form defined by
\begin{align}
\begin{split}
[f,g](t) &:= \sum_{j=1}^{n}\sum_{m=1}^{j}(-1)^{m+j}\left( a_{j}(t)\overline
{g}^{(j)}(t)\right)^{(j-m)}f^{(m-1)}(t)\\
&- \sum_{j=1}^{n}\sum_{m=1}^{j}(-1)^{m+j}\left( a_{j}(t){f}^{(j)}%
(t)\right)^{(j-m)}\overline{g}^{(m-1)}(t).
\label{generalsymplectic}
\end{split}
\end{align}
For $f,g \in \mathcal{D}(T_{\textrm{max}})$, Green's formula yields the identity
\[
\int_{a}^{b}m[f](t)\overline{g}(t)w(t)dt-\int_{a}^{b}f(t)m[\overline g](t)w(t)dt =[f,g](b)-[f,g](a). 
\]
Notice, by definition of $\mathcal{D}(T_{\textrm{max}})$, the limits
\[
[f,g](b)=\lim_{t\rightarrow b}[f,g](t)\quad \text{and}\quad [f,g](a)=\lim_{t\rightarrow a}[f,g](t) 
\]
both exist and are finite for all $f,g \in \mathcal{D}(T_{\textrm{max}})$. We simplify notation by defining
\begin{equation}
[f,g]:=[f,g](b)-[f,g](a). \label{sesquilinear_difference}
\end{equation}
Since the coefficients of $m[\cdot]$ are real-valued, the deficiency indices $d_{\pm}(T_{\textrm{min}})$ of $T_{\textrm{min}}$, defined by
\[
d_{\pm}(T_{\textrm{min}}):=\textrm{dim}\{f \in \mathcal{D}(T_{\textrm{max}})\vert T_{\textrm{max}}f=\pm if\} \quad(i= \sqrt{-1}),
\]
are equal and satisfy the inequality $0 \le d_{\pm} \le 2n$. Consequently, from von Neumann's theory of self-adjoint extensions of symmetric operators (see \cite[Theorem 31, XII.4.31]{DunfordSchwartz}), self-adjoint extensions $S$ of $T_{\textrm{min}}$ exist and satisfy
\[
T_{\textrm{min}}\subset S=S^{\ast} \subset T_{\textrm{max}}. 
\]
We write $d=d_{\pm}(T_{\textrm{min}})$ and call $d$ the deficiency index of $T_{\textrm{min}}$. If $d=0$, then $T_{\textrm{min}}=T_{\textrm{max}}$ is self-adjoint but $d >0$ means that self-adjoint extensions of $T_{\textrm{min}}$ require $d$ boundary conditions (see Theorem \ref{GKN Theorem}) below. 

A set of functions $\{g_1,g_2,\cdots,g_{k}\} \subset \mathcal{D}(T_{\textrm{max}})$ is said to be linearly independent modulo $\mathcal{D}(T_{\textrm{min}})$ if
\[
\sum_{j=1}^{k}\alpha_{j}g_{j} \in \mathcal{D}(T_{\textrm{min}}) \Longrightarrow \text{each}\thinspace \alpha_{j}=0.
\]
In this case, we call $\{g_1,g_2,\cdots,g_k\}$ a set of Glazman boundary functions. If $k=d$, the deficiency index of $T_{\textrm{min}}$, we call this set a GKN set for $T_{\textrm{max}}$.

In terms of self-adjoint extensions of $T_{\textrm{min}}$ (equivalently, all self-adjoint restrictions of $T_{\textrm{max}}$) in $L^2((a,b);w)$, Glazman, Krein, and Naimark are credited with the following result, called the Glazman-Krein-Naimark theorem; see \cite[Theorem 4, Section 18.1]{Naimark}. 

\begin{theorem} $($\text{The GKN Theorem}$)$ \label{GKN Theorem} Suppose the minimal operator $T_{\textrm{min}}$ associated with the differential expression $m[\cdot]$, defined in (\ref{m expression}) has deficiency index $d$.
\begin{enumerate}
\item [(a)] Let $T$ be a self-adjoint extension of $T_{\textrm{min}}$ in $L^2((a,b);w)$ satisfying $T_{\textrm{min}}\subset T \subset T_{\textrm{max}}$. Then there exists a GKN set $\{g_1,g_2,\cdots, g_{d}\} \subset \mathcal{D}(T_{\textrm{max}})$ satisfying the Glazman symmetry conditions 
\begin{equation}
[g_{i},g_{j}]=0 \quad (i,j=1,2,\cdots,d) \label{GKN}
\end{equation}
(see (\ref{sesquilinear_difference})) such that $T$ has the explicit representation
\begin{align}
Tf  &=T_{\textrm{max}}[f]=m[f] \label{IV-4} \\
\mathcal{D}(T) &= \{f \in \mathcal{D}(T_{\textrm{max}}) \vert \thinspace [f,g_{j}]=0 \thinspace (j=1,\cdots,d))\}. \label{IV-5} 
\end{align}
\item [(b)] Let $\{g_1,g_2,\cdots g_{d}\} \subset \mathcal{D}(T_{\textrm{max}})$ be a GKN set for $T_{\textrm{max}}$ satisfying the conditions in (\ref{GKN}). Define the operator $T$ in $L^2((a,b);w)$ by (\ref{IV-4}) and (\ref{IV-5}). Then $T$ is a self-adjoint extension of $T_{\textrm{min}}$ in $L^2((a,b);w)$ and satisfies $T_{\textrm{min}} \subset T \subset T_{\textrm{max}}$.
\end{enumerate}
\end{theorem}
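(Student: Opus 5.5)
The plan is to treat the GKN theorem as a statement in linear algebra on the finite-dimensional quotient space $Q:=\mathcal{D}(T_{\textrm{max}})/\mathcal{D}(T_{\textrm{min}})$, with the boundary form $[\cdot,\cdot]$ of (\ref{sesquilinear_difference}) as the skew-hermitian form on it. First, Green's formula shows that $[f,g]$ depends only on the classes of $f,g$ modulo $\mathcal{D}(T_{\textrm{min}})$. The reason is that $f\in\mathcal{D}(T_{\textrm{min}})$ exactly when $[f,g]=0$ for all $g\in\mathcal{D}(T_{\textrm{max}})$, which is the defining description of $\mathcal{D}(T_{\textrm{min}})$ stated above. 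By von Neumann's formula,
\[
\mathcal{D}(T_{\textrm{max}})=\mathcal{D}(T_{\textrm{min}})\dotplus N_{+}\dotplus N_{-},\qquad N_{\pm}=\ker(T_{\textrm{max}}\mp i),
\]
so $\dim Q=2d$. The form $[\cdot,\cdot]$ is nondegenerate on $Q$, and $i^{-1}[\cdot,\cdot]$ is hermitian with signature $(d,d)$. Indeed, for $f\in N_{+}$ Green's formula gives $[f,f]=2i\|f\|^{2}$, for $f\in N_{-}$ it gives $[f,f]=-2i\|f\|^{2}$, and the cross terms between $N_{+}$ and $N_{-}$ vanish.

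Second, observe that every operator $T$ with $T_{\textrm{min}}\subset T\subset T_{\textrm{max}}$ corresponds to a subspace $V=\mathcal{D}(T)/\mathcal{D}(T_{\textrm{min}})\subset Q$. Since $T_{\textrm{max}}=T_{\textrm{min}}^{*}$, the adjoint is $T^{*}=T_{\textrm{max}}|_{\mathcal{D}(T^{*})}$ with
\[
\mathcal{D}(T^{*})=\{g\in\mathcal{D}(T_{\textrm{max}}): [f,g]=0 \ \ \forall f\in\mathcal{D}(T)\},
\]
again by Green's formula. Hence $T=T^{*}$ if and only if $V=V^{[\perp]}$, that is, $V$ is a Lagrangian subspace of $Q$. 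Nondegeneracy gives $\dim V+\dim V^{[\perp]}=2d$, so any Lagrangian $V$ has dimension $d$.

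For part (a), I would choose representatives $g_{1},\dots,g_{d}\in\mathcal{D}(T)$ of a basis of $V$. These are linearly independent modulo $\mathcal{D}(T_{\textrm{min}})$, so they form a GKN set. They satisfy (\ref{GKN}) because $V$ is isotropic. Finally, $\{f:[f,g_{j}]=0 \ \forall j\}=V^{[\perp]}+\mathcal{D}(T_{\textrm{min}})=\mathcal{D}(T)$, which is (\ref{IV-5}).

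For part (b), the classes of the $g_{j}$ span a $d$-dimensional isotropic subspace $V$. The space defined by (\ref{IV-5}) is the preimage of $V^{[\perp]}$, which has dimension $d$ and contains $V$, so it equals $V$. Therefore $V$ is Lagrangian and $T$ is self-adjoint.

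The main obstacle is the first step: establishing $\dim Q=2d$ together with nondegeneracy, that is, that the boundary form exactly characterizes $\mathcal{D}(T_{\textrm{min}})$ and that the deficiency spaces sit in $Q$ as described. I would handle this by citing the von Neumann decomposition, which is standard (Dunford--Schwartz, Naimark), and the description of $\mathcal{D}(T_{\textrm{min}})$ given in this section, rather than reproving them.
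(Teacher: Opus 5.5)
The paper gives no proof of this theorem; it quotes it from Naimark [Theorem 4, Section 18.1] and uses it as a black box. Your argument is a correct and complete proof, given von Neumann's formula and the facts $T_{\textrm{min}}^{*}=T_{\textrm{max}}$, $T_{\textrm{max}}^{*}=T_{\textrm{min}}$ stated in the paper.

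You identify operators $T_{\textrm{min}}\subset T\subset T_{\textrm{max}}$ with subspaces $V$ of the $2d$-dimensional quotient $Q$. Green's formula shows that $\mathcal{D}(T^{*})$ corresponds to $V^{[\perp]}$, so self-adjointness means $V$ is Lagrangian. Parts (a) and (b) then both reduce to the count $\dim V+\dim V^{[\perp]}=2d$. This is the complex-symplectic (Lagrangian subspace) formulation of GKN.

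The classical route instead uses von Neumann's parametrization of self-adjoint extensions by isometries $U\colon N_{+}\to N_{-}$. There $\mathcal{D}(T)=\mathcal{D}(T_{\textrm{min}})\dotplus\{\phi+U\phi:\phi\in N_{+}\}$, and one may take $g_{j}=\phi_{j}+U\phi_{j}$ for a basis $\{\phi_{j}\}$ of $N_{+}$. The relation $[g_{i},g_{j}]=2i(\phi_{i},\phi_{j})-2i(U\phi_{i},U\phi_{j})=0$ is then exactly the isometry of $U$.

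What each route buys:
\begin{itemize}
\item The isometry route gives an explicit parametrization of all self-adjoint extensions, and concrete boundary functions built from deficiency elements.
\item Your route never constructs $U$ and treats (a) and (b) by the same dimension count.
\item Your route also shows that only two facts are used: $\dim Q=2d$ and nondegeneracy of $[\cdot,\cdot]$ on $Q$.
\item Your signature $(d,d)$ computation is not needed for the statement itself. It only guarantees that Lagrangian subspaces, i.e. self-adjoint extensions, exist.
\end{itemize}

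Three small wording points:
\begin{itemize}
\item In (a), $\{f:[f,g_{j}]=0\ \forall j\}$ is the preimage of $V^{[\perp]}$ in $\mathcal{D}(T_{\textrm{max}})$, rather than $V^{[\perp]}+\mathcal{D}(T_{\textrm{min}})$.
\item In (b), it is $V^{[\perp]}$, not the space (\ref{IV-5}), that has dimension $d$.
\item Independence of $[f,g]$ from the representative of $g$ uses the skew-symmetry $[g,f]=-\overline{[f,g]}$, which also follows from Green's formula.
\end{itemize}
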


\section{Right-Definite Theory of the Legendre Differential
Expression\label{Right-Definite Theory - Legendre}}  

The Legendre polynomials $\{P_{m}\}_{m=0}^{\infty}$ form a complete orthonormal set in the Hilbert space $L^{2}(-1,1)$. In fact, with the $m^{th}$ Legendre polynomial defined by%
\begin{equation}
P_{m}(t)=\sqrt{\frac{2m+1}{2}}\sum_{j=0}^{[m/2]}\frac{(-1)^{j}(2m-2j)!}%
{2^{m}j!(m-j)!(m-2j)!}t^{m-2j}\quad(m\in\mathbb{N}_{0}), \label{Legendrepoly}%
\end{equation}
the sequence $\{P_{m}\}_{m=0}^{\infty}$ satisfies
\begin{equation}
(P_{m},P_{n})_{L^2(-1,1)}=\delta_{m,n}\quad(m,n\in\mathbb{N}_{0}), \label{Legendrenorm}%
\end{equation}
where $\delta_{m,n}$ is the Kronecker delta function. 

The $m^{th}$ Legendre polynomial $y=P_m(t)$ is a solution of 
\[
\ell[y](t)=(m(m+1)+k)y(t);
\]
that is, $\{P_m\}_{m=0}^\infty$ are `formal' eigenfunctions of the differential expression $\ell[\cdot]$ defined in (\ref{Legendre DE}). Various properties of the Legendre polynomials can be found in Szeg\"{o}'s classic text \cite[Chapter IV]{Szego}.

The maximal operator $T_{\textrm{max},2}:\mathcal{D}(T_{\textrm{max},2})\subset L^{2}(-1,1)\rightarrow
L^{2}(-1,1)$, generated by $\ell[\cdot]$, is
given by%
\begin{align*}
(T_{\textrm{max},2})(f)(t)  &  =\ell\lbrack f](t)\quad(\text{a.e }t\in(-1,1))\\
f\in\mathcal{D}(T_{\textrm{max},2})  &  =\{f:(-1,1)\rightarrow\mathbb{C}\mid f \in AC_{\textrm{loc}}^{(1)}(-1,1);f,\ell\lbrack f]\in L^{2}(-1,1)\}.\label{Max Operator1}
\end{align*}
Green's formula associated with $T_{\textrm{max},2}$ is%
\[
\int_{-1}^{1}\left(  (T_{\textrm{max},2})(f)(t)\overline{g}(t)-f(t)
{(T_{\textrm{max},2})[\overline{g}]}(t)\right)  dt=[f,g]_2(1)-[f,g]_2(-1)\quad(f,g\in\mathcal{D}%
(T_{\textrm{max},2})),
\]
where the symplectic form $[\cdot,\cdot]_2(\cdot)$ is the generalized Wronskian%
\begin{equation}
 [f,g]_2(t):=(1-t^{2})\left(  f(t)\overline{g}^{\prime}(t)-f^{\prime
}(t)\overline{g}(t)\right)  \quad(t\in(-1,1)).
\label{Legendre sesquilinear form}%
\end{equation}
By definition of $\mathcal{D}(T_{\textrm{max},2}),$ the limits%
\[
\lim_{t\rightarrow\pm1}[f,g]_2(t):=[f,g]_2(\pm1)\quad(f,g\in\mathcal{D}(T_{\textrm{max},2}))
\]
both exist and are finite.

The minimal operator $T_{\textrm{min},2}:\mathcal{D}(T_{\textrm{min},2})\subset L^{2}(-1,1)\rightarrow
L^{2}(-1,1)$ is defined to be%
\begin{align*}
(T_{\textrm{min},2}f)(t)  &  =\ell\lbrack f](t)\quad(\text{a.e }t\in(-1,1))\\
f\in\mathcal{D}(T_{\textrm{min},2})  &  =\{f\in\mathcal{D}(T_{\textrm{max},2})\mid [f,g]_2=0 \quad(f,g\in\mathcal{D}(T_{\textrm{max},2})\},
\end{align*}
where
\[
[f,g]_2 :=[f,g]_2(1)-[f,g]_2(-1).
\]
This Legendre differential expression $\ell\lbrack\cdot]$ is, using the
language of the Weyl classification (see \cite[Part 5]{Hellwig}), in the
limit-circle case at both endpoints $t=\pm1.$ Indeed, the principal and
nonprincipal solutions of
\[
\ell\lbrack y](t)=ky(t)
\]
are, respectively,%
\begin{equation}
u(t):=1\text{ and }v(t):=\dfrac{1}{2}\ln\bigg(\dfrac{1+t}{1-t}\bigg)
\quad(t\in(-1,1)). \label{LD-Motivation-PrincipalNonPrincipal}%
\end{equation}
\newline Both functions in $(\ref{LD-Motivation-PrincipalNonPrincipal}$) belong to $L^{2}(-1,1).$ By the Weyl
Alternative, the deficiency index of the minimal operator $T_{\textrm{min},2}$ is $2$.
This implies, from the general GKN theory, 
that two appropriate, singular boundary conditions involving the sesquilinear form
(\ref{Legendre sesquilinear form}) are required to produce any self-adjoint
extension in $L^{2}(-1,1)$ of the minimal operator $T_{\textrm{min},2}$. We note that there are uncountably many unbounded self-adjoint extensions $S$ in
$L^{2}(-1,1)$ of the minimal operator $T_{\textrm{min},2}$ satisfying%
\[
T_{\textrm{min},2}\subset S=S^{\ast}\subset T_{\textrm{min},2}^{\ast}=T_{\textrm{max},2}.
\]
The particular self-adjoint operator $A:\mathcal{D}(A)\subset L^{2}(-1,1)\rightarrow
L^{2}(-1,1)$ that we focus on in this paper is the Friedrichs extension defined in (\ref{Operator A}); specifically %
\begin{equation}
\begin{split}
(Af)(t)  &  =\ell\lbrack f](t)\quad(\text{a.e }t\in(-1,1))\\
f\in\mathcal{D}(A)  &  =\{f\in\mathcal{D}(T_{\textrm{max},2})\vert [f,g_1]_2 = [f,g_2]_2=0\},
\end{split}
\label{LegendreSA}
\end{equation}
where $\{g_1,g_2\} \subset C^2[-1,1]$ is the GKN set defined by
\[
g_{1}(t)=\left\{ 
\begin{array}{ll}
1 & t\text{ near }1 \\ 
0 & t\text{ near }-1%
\end{array}%
\right. \text{ and }g_{2}(t)=\left\{ 
\begin{array}{ll}
0 & t\text{ near }1 \\ 
1 & t\text{ near }-1.%
\end{array}%
\right. 
\]%
Notice that, for $f\in \mathcal{D}(A),$, we obtain separated boundary conditions for $A$, namely
\[
\lbrack f,g_{1}]_{2}=-\lim_{t\rightarrow 1}(1-t^{2})f^{\prime }(t)\text{ and 
}[f,g_{2}]_{2}=\lim_{t\rightarrow -1}(1-t^{2})f^{\prime }(t).\text{ }
\]
We refer the reader to another characterization of $\mathcal{D}(A)$ given in \cite[Section 6.2]{GLN} in which \textit{regularized} separated boundary conditions (or generalized boundary values) are given to describe the domain of $A$ using the principle and non-principle solutions defined in (\ref{LD-Motivation-PrincipalNonPrincipal}).

This operator $A$ has the Legendre polynomials $\{P_{m}\}_{m=0}^{\infty}$ as a
complete set of eigenfunctions and a discrete spectrum given explicitly by%
\[
\sigma(A)=\{m(m+1)+k\mid m\in\mathbb{N}_{0}\}.
\]

In \cite{ELM-Legendre}, the authors obtain the following characterization
theorem for $\mathcal{D}(A).$

\begin{theorem}
\label{ELM Theorem}Suppose $f\in\mathcal{D}(T_{\textrm{max},2})$. The
following statements are equivalent:
\end{theorem}

\begin{enumerate}
\item[(i)] $f \in\mathcal{D}(A)$.

\item[(ii)] $\lim_{t\rightarrow\pm1}(1-t^{2})f^{\prime}(t)=0.$

\item[(iii)] $f^{\prime}\in L^{2}(-1,1).$

\item[(iv)] $f^{\prime}\in L^{1}(-1,1).$

\item[(v)] $f$ is bounded on $(-1,1).$

\item[(vi)] $f\in AC[-1,1].$

\item[(vii)] $(1-t^{2})^{1/2}f^{\prime}\in L^{2}(-1,1).$
\end{enumerate}

Notice the striking equivalence of (iii) and (iv) in the above theorem. 
\begin{remark}
Observe from conditions (iii)-(vii) that the domain $\mathcal{D}(A)$ can be expressed in terms of just one integrability condition instead of two boundary conditions as demanded by the GKN theorem.
\end{remark}

For $f,g\in\mathcal{D}(A)$, one integration by parts, and an appeal to items
(ii) and (vi) of Theorem \ref{ELM Theorem} lead to the identity
\begin{align*}
(Af,g)_{L^2(-1,1)}  &  =\int_{-1}^{1}\left(  \left(  -(1-t^{2})f^{\prime}(t)\right)
^{\prime}+kf(t)\right)  \overline{g}(t)dt\\
&  =\left.  -(1-t^{2})f^{\prime}(t)\overline{g}(t)\right\vert _{-1}^{1}%
+\int_{-1}^{1}\left(  (1-t^{2})f^{\prime}(t)\overline{g}^{\prime
}(t)+kf(t)\overline{g}(t)\right)  dt\\
&  =\int_{-1}^{1}\left(  (1-t^{2})f^{\prime}(t)\overline{g}^{\prime
}(t)+kf(t)\overline{g}(t)\right)  dt;
\end{align*}
that is,
\begin{equation}
(Af,g)_{L^2(-1,1)}=\int_{-1}^{1}\left(  (1-t^{2})f^{\prime}(t)\overline{g}^{\prime
}(t)+kf(t)\overline{g}(t)\right)  dt\quad(f,g\in\mathcal{D}(A));
\label{Dirichlet's Identity}%
\end{equation}
this identity (\ref{Dirichlet's Identity}) is known as Dirichlet's identity. It follows, from (\ref{Dirichlet's Identity}), that
\begin{equation}
(Af,f)_{L^2(-1,1)}\geq k(f,f)_{L^2(-1,1)}\quad(f\in\mathcal{D}(A)); \label{Bounded below}%
\end{equation}
that is to say, $A$ is bounded below in $L^{2}(-1,1)$ by $kI,$ where $I$ is
the identity operator on $L^{2}(-1,1)$. This inequality is key to our left-definite analysis of the Legendre operator $A$. 
Sections \ref{Brief LD Theory} and \ref{Left-Definite Legendre Results} discuss this left-definite theory in detail.

\section{Powers of the Legendre Self-Adjoint Operator}\label{Powers}

Since our focus is in studying explicit properties of $\mathcal{D}(A^n)$, when $n \in \mathbb{N}$, we need to know the explicit form of $\ell^n[\cdot]$, the $n^{th}$ composite power of the Legendre expression $\ell[\cdot]$.

In \cite[Theorem 4.2]{Everitt-Littlejohn-Wellman-Legendre2002}, the authors show that, for $n\in\mathbb{N},$ the $n^{th}$ integral power $\ell^{n}[\cdot]$ of the
Legendre expression $\ell\lbrack\cdot]$ is given by%
\begin{equation}
\ell^{n}[y](t)=\sum_{j=0}^{n}(-1)^{j}\left(  c_{j}(n,k)(1-t^{2})^{j}%
y^{(j)}(t)\right)  ^{(j)}\label{nth_power},
\end{equation}
where%
\begin{equation}
c_{0}(n,k)=\left\{
\begin{array}
[c]{ll}%
0 & \text{if }k=0\\
k^{n} & \text{if }k>0,
\end{array}
\label{c_0}
\right. 
\end{equation}%
\begin{equation}
c_{j}(n,k)=\left\{
\begin{array}
[c]{ll}%
PS_{n}^{(j)} & \text{if }k=0\\
\sum_{r=0}^{j}\dbinom{n}{r}PS_{n-r}^{(j)}k^{r} & \text{if }k>0,
\end{array}
\label{c_j}
\right. 
\end{equation}
and where $PS_{n}^{(j)}$ is the Legendre-Stirling number, defined by
$PS_{0}^{(0)}=1$ and%
\begin{equation}
PS_{n}^{(j)}:=\sum_{r=1}^{n}(-1)^{r+j}\frac{(2r+1)(r^{2}+r)^{n}}%
{(r+j+1)!(j-r)!}\quad(n,j\in\mathbb{N},j\leq n). \label{Legendre-Stirling numbers}
\end{equation}
As shown in \cite[Theorem 4.1]{Everitt-Littlejohn-Wellman-Legendre2002}, the numbers $\{c_j(n,k)\}$ are the unique solutions to the recurrence relations
\begin{equation}
\sum_{j=0}^nc_j(n,k)\dfrac{(m+j)!}{(m-j)!}=(m(m+1)+k)^n; \label{RR}
\end{equation}
furthermore, when $k>0$, these numbers $\{c_j(n,k)\}$ are all positive.
The Legendre-Stirling numbers behave similarly to the classical Stirling
numbers of the second kind $S_{n}^{(j)}$(see \cite[Chapter V]{Comtet1974})$.$
For explicit information on the Legendre-Stirling numbers, we recommend the
contributions \cite{Andrews-Gawronski-Littlejohn2011},
\cite{Andrews-Littlejohn2009}, \cite{Egge}, \cite{Everitt-Littlejohn-Wellman-Legendre2002},
and\ \cite{Gawronski-Littlejohn-Neuschel} for further information on their
combinatorial properties and interpretations.

For $n \in \mathbb{N}$, the maximal operator $T_{\textrm{max},2n}$, generated by $\ell^n[\cdot]$ in $L^2(-1,1)$, is given by
\begin{equation*}
(T_{\textrm{max},{2n}}f)(t)=\ell^n[f](t)\quad(\text {a.e.}\thinspace t \in (-1,1))
\end{equation*}
with domain
\begin{equation*}
\mathcal{D}(T_{\textrm{max},{2n}})=\{f:(-1,1) \to \mathbb{C} \vert f \in AC_{\textrm{loc}}^{(2n-1)}(-1,1);f,\ell^n[f] \in L^2(-1,1)\}.
\end{equation*}
Green's formula associated with $T_{\textrm{max},2n}$ is%
\[
\int_{-1}^{1}\left(  T_{\textrm{max},2n})(f)(t)\overline{g}(t)-f(t)
{(T_{\textrm{max},2n})\overline{g}}(t)\right)  dt=[f,g]_{2n}(1)-[f,g]_{2n}(-1)\quad(f,g\in\mathcal{D}%
(T_{\textrm{max},2n})),
\]
where the symplectic form $[\cdot,\cdot]_{2n}(\cdot)$ (see also (\ref{generalsymplectic})) is given explicitly by%
\begin{align}
\begin{split}
[f,g]_{2n}(t) &= \sum_{j=1}^{n}\sum_{m=1}^{j}(-1)^{m+j}\left(  c_{j}(n,k)(1-t^{2})^{j}\overline
{g}^{(j)}(t)\right)^{(j-m)}f^{(m-1)}(t) \\ 
&- \sum_{j=1}^{n}\sum_{m=1}^{j}(-1)^{m+j}\left( c_{j}(n,k)(1-t^{2})^{j}f^{(j)}%
(t)\right)^{(j-m)}\overline{g}^{(m-1)}(t)
\label{symplectic2n}
\end{split}
\end{align}
for $f,g\in\mathcal{D}(T_{\max,2n})$ and $-1<t<1$.
When $n=1$, this symplectic form is given in (\ref{Legendre sesquilinear form}). 
We recommend \cite{Littlejohn-Wicks} and \cite{Wicks} for a comprehensive discussion of the spectral analysis of $A^2$.

Notice, by definition of $\mathcal{D}(T_{\textrm{max},2n})$, that the limits%
\[
\lim_{t\rightarrow\pm1}[f,g]_{2n}(t):=[f,g]_{2n}(\pm1)\quad(f,g\in\mathcal{D}(T_{\textrm{max},2n}))
\]
both exist and are finite. For simplicity, define
\begin{equation}
[f,g]_{2n}:= [f,g]_{2n}(1)-[f,g]_{2n}(-1). \label{Legendre Symplectic}
\end{equation}

The minimal operator $T_{\textrm{min},2n}:\mathcal{D}(T_{\textrm{min},2n})\subset L^{2}(-1,1)\rightarrow
L^{2}(-1,1)$ is defined to be%
\begin{align*}
(T_{\textrm{min},2n}f)(t)  &  =\ell^n\lbrack f](t)\quad(\text{a.e.}\thinspace t\in(-1,1))\\
f\in\mathcal{D}(T_{\textrm{min},2n})  &  =\{f\in\mathcal{D}(T_{\textrm{max},2n})\mid [f,g]_{2n}=0\quad(g \in \mathcal{D}(T_{\textrm{max},2n})\}.
\end{align*}
As discussed in Section \ref{GKN theory}, $T_{\textrm{min},2n}$ is a closed, symmetric operator in $L^{2}(-1,1)$
satisfying $T_{\textrm{min},2n}^{\ast}={T_{\textrm{max},2n}}$ and $T_{\textrm{max},2n}^{\ast}={T_{\textrm{min},2n}}$.

\section{A GKN Characterization of Integral Powers of the Legendre Operator $A$}\label{A^n GKN}

I. M. Glazman (\cite{Glazman1950} and \cite[pp. 24--27]{Glazman}) showed that if $S$ is a symmetric operator in a Hilbert space $H$ with finite deficiency indices $d_{\pm}(S)$, then $d_{\pm}(S^2)=d_{+}(S)+d_{-}(S)$; see also the recent contribution by \cite{FGL_Deficiency_Indices}. 

Recall from Section \ref{Right-Definite Theory - Legendre} that the minimal operator $T_{\textrm{min},2}$, generated by the Legendre differential expression $\ell[\cdot]$, has equal deficiency indices $d(A)=d_{+}(A)=d_{-}(A)=2.$ Repeated applications of Glazman's result shows that, for $n \in \mathbb{N}$, the deficiency indices of $A^{n}$ are equal and 
\begin{equation}
d(A^n)=d_{\pm}(A^n)=2n.
\end{equation}
Consequently, from the GKN theorem (Theorem \ref{GKN Theorem}), for each fixed $n\in 
\mathbb{N},$ there are $2n$ boundary conditions needed to characterize the domain
of $A^{n}$. More specifically, there exists GKN sets $\{g_{j}\}_{j=1}^{2n}\subset \mathcal{D}(T_{\mathrm{\max }%
\text{,}2n})$ satisfying

\begin{enumerate}
\item[(i)] $\{g_{j}\}_{j=1}^{2n}$ is linearly independent modulo $T_{%
\mathrm{\min }\text{,}2n}$ \text{and}

\item[(ii)] $[g_{i},g_{j}]_{2n}=0$ for $i,j=1,2,\ldots, 2n,$
\end{enumerate}

\noindent such that 
\begin{equation}
\mathcal{D}(A^{n})=\{f\in \mathcal{D}(T_{\mathrm{\max }\text{,}2n})\mid
\lbrack f,g_{i}]_{2n}=0\quad (i=1,2,\ldots 2n)\} \label{LegendreGKN},
\end{equation}%
where $[f,g]_{2n}$ is defined in (\ref{Legendre Symplectic}). We show, in Section \ref{Left-Definite Legendre Results}, that these $%
2n$ boundary conditions can be replaced by \textit{one} integrability
condition.

\begin{remark}\label{GKN sets for Legendre}
The usual GKN set associated with $\mathcal{D}(T_{\mathrm{max},2n})$ that describes $\mathcal{D}(A^n)$ is $\{g_j\}_{j=1}^{2n} \subset C^{2n}[-1,1]$, where 
\[
g_{j}(x)=\left\{ 
\begin{array}{ll}
P_{j-1}(x) & \text{near }x=-1 \\ 
0 & \text{near }x=1%
\end{array}%
\right. \quad (j=1,2,\ldots ,n),
\]%
\[
g_{n+j}(x)=\left\{ 
\begin{array}{ll}
0 & \text{near }x=-1 \\ 
P_{j-1}(x) & \text{near }x=1
\end{array}
\right. \quad (j=1,2,\ldots ,n),
\]
and where $\{P_{j-1}\}_{j=1}^{n}$ are Legendre polynomials.  Of course, by
linearity, each $P_{j-1}(x)$ above can be replaced by the monomial $x^{j-1}$. 
\end{remark}



\section{A Brief Introduction to Left-Definite Theory\label{Brief LD Theory}}
Left-definite theory has its roots in the theory of second-order Sturm--Liouville
differential equations. Indeed, consider the Sturm--Liouville (SL) generalized eigenvalue equation
\[
(\tau f )(t) := w(t)^{-1} ((-p(t)f'(t))' + q(t)) f(t) = \lambda f(t)  
\]
for a.e. $t \in (a,b)$, where $(a,b)$ is an interval, bounded or unbounded, of the real line $\mathbb{R}$. Here, we make the usual assumptions that $1/p, q, r$ are real-valued and locally integrable on $(a, b)$, with $p > 0$ a.e. on $(a, b)$ and $f \in AC_{\textrm{loc}}(a, b)$ with quasi-derivative $f^{[1]} := pf' \in AC_{\textrm{loc}}(a, b)$. The two
obvious sesquilinear forms in this case are
\begin{equation}
\int_{a}^{b}f(t)\overline{g}(t)w(t)dt\text{ and }\,\int_{a}^{b}\left(
p(t)f^{\prime}(t)\overline{g}^{\prime}(t)+q(t)f(t)\overline{g}(t)\right)  dt.
\label{two sesquilinear forms}%
\end{equation}
If $w>0$ a.e.~on $(a,b)$, this SL problem is called \textit{right-definite}
since in this case the sesquilinear form
\begin{equation*}
\int_{a}^{b}f(t)\overline{g}(t)w(t)dt\quad(f,g\in L^{2}((a,b);w))%
\end{equation*}
is positive in the sense that it defines the usual scalar product in the Hilbert
space $L^{2}((a,b);w\thinspace)$. The problem is called \textit{left-definite} if the
sesquilinear form
\begin{equation*}
\int_{a}^{b}\left(  p(t)f^{\prime}(t)\overline{g}{^\prime}(t)%
+q(t)f(t)\overline{g}(t)\right)dt %
\end{equation*}
is positive, that is, it defines a positive scalar product. When a SL problem
is both right-definite and left-definite, the two sesquilinear forms in $(\ref{two sesquilinear forms})$ are connected
through Dirichlet's formula
\[%
\begin{split}
\left(  \tau f,g\right)  _{L^{2}((a,b);w)}  &  =\int_{a}^{b}(\tau
f)(t)\overline{g}(t)w(t)dt\\
&  =\int_{a}^{b}\left(  -(p(t)f^{\prime}(t))^{\prime}+q(t)f(t)\right)
\overline{g}(t)dt\\
&  =\int_{a}^{b}\left(  p^{-1}(t)f^{[1]}(t)\overline{g}^{[1]}(t)%
+q(t)f(t)\overline{g}(t)\right)  dt\\
&  =\int_{a}^{b}\left(  p(t)f^{\prime}(t)\overline{g}^{\prime}(t)%
+q(t)f(t)\overline{g}(t)\right)  dt:=(f,g)_{1};
\end{split}
\]
here $f,g\in AC_{\mathrm{loc}}(a,b)$, $pf^{\prime},pg^{\prime}\in
AC_{\mathrm{loc}}(a,b)$ and $f,g$ have compact support in $(a,b)$. In the
literature, the inner product $(\cdot,\cdot)_{1}$, extended to elements in an
appropriate Hilbert-Sobolev function space, is called the
left-definite inner product. Littlejohn and Wellman
refer to $(\cdot.\cdot)_{1}$ as the \textit{first} left-definite inner product as we explain below.

In this section, we give a brief description of a general left-definite operator theory; we highlight only the main left-definite results needed for this
paper; for a full description of left-definite theory, see
\cite{Littlejohn-Wellman-LD-2002}, \cite{Littlejohn-WellmanSpectra}, and more recently, \cite{FGHL2025}.

Throughout this section, we assume that $H=(V,(\cdot,\cdot))$ is a Hilbert
space, where $V$ is a vector space over $\mathbb{C}.$ Let $A:\mathcal{D}%
(A)\subset H\rightarrow H$ be a self-adjoint operator, bounded or unbounded, that is bounded below by
$kI$ for some $k>0;$ that is,
\[
(Ax,x)\geq k(x,x)\quad(x\in\mathcal{D}(A)).
\]
It follows that $A^{r},$ for each $r>0,$ is a self-adjoint operator which is
bounded below in $H$ by $k^{r}I.$ Using the Hilbert space spectral theorem
(see \cite[Chapters 12 and 13]{Rudin}), the authors in
\cite{Littlejohn-Wellman-LD-2002} and \cite{Littlejohn-WellmanSpectra}
construct a continuum of Hilbert spaces $\{H_{r}\}_{r>0}$; we call $H_{r}$
the $r^{th}$ left-definite space associated with $(H,A)$. The more recent publication \cite{FGHL2025}, using a model theory approach to
left-definite theory, establishes the main results of left-definite theory
from another point of view and adds several new examples. 

Suppose, for $r>0,$ $V_{r}$ is a subspace of $V$ and let $(\cdot,\cdot)_{r}$
and $\left\Vert \cdot\right\Vert _{r}$ denote an inner product and associated
norm, respectively, over $V_{r}$ (quite possibly different from $(\cdot
,\cdot)$ and $\left\Vert \cdot\right\Vert $). We denote the resulting inner
product space by $H_{r}=(V_{r},\left(  \cdot,\cdot\right)  _{r}).$

We now define an $r^{th}$ left-definite space associated with $(H,A).$

\begin{definition}
\label{Definition of rth LD space} Let $r>0$ and suppose $V_{r}$ is a subspace
of the Hilbert space $H$ $=(H,(\cdot,\cdot))$ and $\left(  \cdot,\cdot\right)
_{r}$ is an inner product on $V_{r}.$ Let $H_{r}=(V_{r},(\cdot,\cdot)_{r}).$
We say that $H_{r}$ is an $r^{th}$ left-definite space\textbf{ }%
\textit{associated with the pair }$(H,A)$ if each of the following conditions
hold:\newline(1) $H_{r}$ is a Hilbert space,\newline(2) $\mathcal{D}(A^{r})$
is a subspace of $V_{r},$ \newline(3) $\mathcal{D}(A^{r})$ is dense in
$H_{r},$ \newline(4) $\left(  x,x\right)  _{r}\geq k^{r}\left(  x,x\right)
\quad(x\in V_{r}),$ and\newline(5) $\left(  x,y\right)  _{r}=\left(
A^{r}x,y\right)  \quad(x\in\mathcal{D}(A^{r}),\;y\in V_{r}).$
\end{definition}

The following theorem, proved in \cite[Theorem 3.1]{Littlejohn-Wellman-LD-2002}, clarifies the existence and uniqueness of left-definite spaces and establishes that the nest $\{H_r\}_{r>0}$ is a Hilbert scale. 

\begin{theorem}
\label{E/U LD Theorem} Suppose $A:\mathcal{D}(A)\subset H\rightarrow H$ is a self-adjoint
operator that is bounded below by $kI$ for some $k>0.$ Let $r>0.$ Define
$H_{r}=(V_{r},(\cdot,\cdot)_{r})$ by%
\begin{equation}
V_{r}=\mathcal{D}(A^{r/2}), \label{V_r}%
\end{equation}
and%
\[
(x,y)_{r}=(A^{r/2}x,A^{r/2}y)\quad(x,y\in V_{r}).
\]
Then $H_{r}$ is an $r^{\text{th}}$ left-definite space associated with the pair $(H,A).$
Moreover, if $H_{r}^{\prime}:=(V_{r}^{\prime},(\cdot,\cdot)_{r}^{\prime})$ is
another $r^{th}$ left-definite space associated with the pair $(H,A),$ then
$V_{r}=V_{r}^{\prime}$ and $(x,y)_{r}=(x,y)_{r}^{\prime}$ for all $x,y\in
V_{r}=V_{r}^{\prime};$ i.e. $H_{r}$ is uniquely determined by the five
conditions in Definition \ref{Definition of rth LD space}. \newline In particular,
\begin{equation}
\mathcal{D}(A^{1/2})=V_{1},\mathcal{D}(A)=V_{2},\text{ and, for $n \in \mathbb{N},$
}\mathcal{D}(A^{n})=V_{2n}. \label{V_(2n)}%
\end{equation}
Moreover,

\begin{enumerate}
\item[(1)] \textit{suppose }$A$\textit{ is bounded. Then, for each} $r>0,$

\begin{enumerate}
\item[(i)] $V=V_{r};$

\item[(ii)] \textit{the inner products} $(\cdot,\cdot)$ and $(\cdot,\cdot
)_{r}$ \textit{are equivalent};
\end{enumerate}

\item[(2)] \textit{suppose }$A$\textit{ is unbounded}. \textit{Then}

\begin{enumerate}
\item[(i)] $V_{r}$\textit{ is a proper subspace of }$V;$

\item[(ii)] $V_{s}$\textit{ is a proper subspace of }$V_{r}$\textit{ whenever
}$0<r<s;$

\item[(iii)] \textit{the inner products }$(\cdot,\cdot)$\textit{ and }%
$(\cdot,\cdot)_{s}$\textit{ are not equivalent for any }$s>0;$

\item[(iv)] \textit{the inner products }$(\cdot,\cdot)_{r}$\textit{ and
}$(\cdot,\cdot)_{s}$\textit{ are not equivalent for any }$r,s>0,$ $r\neq s.$
$\square$
\end{enumerate}
\end{enumerate}
\end{theorem}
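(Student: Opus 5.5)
We prove Theorem \ref{E/U LD Theorem} with the spectral theorem for the self-adjoint operator $A\geq kI>0$. For every $r>0$ it gives a positive self-adjoint $A^{r/2}$ that is bounded below by $k^{r/2}I$.

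\emph{Existence.} Set $V_r=\mathcal{D}(A^{r/2})$ and $(x,y)_r=(A^{r/2}x,A^{r/2}y)$; we check conditions (1)--(5) of Definition \ref{Definition of rth LD space}.
\begin{itemize}
\item[(1)] $\|x\|_r\geq k^{r/2}\|x\|$, so $\|\cdot\|_r$ is equivalent to the graph norm of the closed operator $A^{r/2}$. Hence $H_r$ is complete.
\item[(2)] The functional calculus gives $\mathcal{D}(A^r)\subset\mathcal{D}(A^{r/2})$.
\item[(3)] $\mathcal{D}(A^r)$ is a core for $A^{r/2}$: spectral truncations $E([k,N])x$ lie in $\mathcal{D}(A^r)$ and converge to $x$ in the graph norm of $A^{r/2}$. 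This gives density in $H_r$.
\item[(4)] This is the lower bound $\|x\|_r\geq k^{r/2}\|x\|$ already noted.
\item[(5)] For $x\in\mathcal{D}(A^r)$ we have $A^{r}x=A^{r/2}A^{r/2}x$, and $A^{r/2}$ is self-adjoint. Therefore $(A^rx,y)=(A^{r/2}x,A^{r/2}y)$ for all $y\in V_r$.
\end{itemize}
The identities \eqref{V_(2n)} are immediate from the definition $V_r=\mathcal{D}(A^{r/2})$.

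\emph{Uniqueness, which is the main step.} Let $H_r'=(V_r',(\cdot,\cdot)_r')$ be another $r^{th}$ left-definite space. By condition (5) applied to both spaces, $(x,y)_r'=(A^rx,y)=(x,y)_r$ for all $x,y\in\mathcal{D}(A^r)$. Take $x\in V_r'$. By (3), there is a sequence $x_j\in\mathcal{D}(A^r)$ with $x_j\to x$ in $H_r'$. Then $(x_j)$ is Cauchy in $\|\cdot\|_r'$, hence Cauchy in $\|\cdot\|_r$, since the two norms agree on $\mathcal{D}(A^r)$. It is also Cauchy in $H$, by (4).

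Since $H_r$ is complete, $x_j\to z$ in $H_r$. By (4), $x_j\to z$ in $H$. Also $x_j\to x$ in $H$, by (4) for $H_r'$. Hence $z=x$, which shows $V_r'\subset V_r$ and $\|x\|_r'=\|x\|_r$. The symmetric argument gives the reverse inclusion, and polarization then gives equality of the inner products.

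The delicate point is the identification of limits through the common ambient space $H$. Condition (4) is exactly what makes this work: without it, the two completions could differ.

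\emph{Bounded case (1).} If $A$ is bounded, then $A^{r/2}$ is bounded with $\mathcal{D}(A^{r/2})=V$. Moreover $k^{r}\|x\|^2\leq\|x\|_r^2\leq\|A\|^{r}\|x\|^2$, so the inner products are equivalent.

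\emph{Unbounded case (2).} Suppose $A$ is unbounded. Then its spectrum is unbounded above, so we can choose disjoint intervals $[\lambda_m,\lambda_m+1]$ with $\lambda_m\to\infty$ and each $E([\lambda_m,\lambda_m+1])\neq0$. Pick unit vectors $e_m$ in those ranges; they are mutually orthogonal. Then $\|e_m\|_s^2\approx\lambda_m^{s}$ and $\|e_m\|_r^2\approx\lambda_m^{r}$.
\begin{itemize}
\item[(iii), (iv)] The ratio $\|e_m\|_s/\|e_m\|_r\approx\lambda_m^{(s-r)/2}$ is unbounded whenever $r\neq s$ (with $r=0$ meaning the norm of $H$). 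So no two of these norms are equivalent.
\item[(i), (ii)] Let $0\leq r<s$, with $V_0=V$. Set $x=\sum_m c_me_m$, where $c_m=\lambda_m^{-r/2}m^{-1}$. We may pass to a subsequence with $\lambda_m$ growing fast, say $\lambda_m\geq m^{2/(s-r)}\cdot 4^m$. Then $\sum|c_m|^2\lambda_m^{r}=\sum m^{-2}<\infty$, so $x\in V_r$. But $\sum|c_m|^2\lambda_m^{s}=\sum m^{-2}\lambda_m^{s-r}=\infty$, so $x\notin V_s$.
\end{itemize}
This completes the proof of Theorem \ref{E/U LD Theorem}.
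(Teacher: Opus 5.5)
Your proof is correct: existence via the functional calculus, uniqueness by identifying the $H_r$-limit and the $H_r'$-limit of an approximating sequence from $\mathcal{D}(A^r)$ inside $H$ through condition (4), and parts (1)--(2) via spectral projections onto disjoint intervals $[\lambda_m,\lambda_m+1]$ (your $\approx$ is best written as the explicit bounds $\lambda_m^{s}\le\|e_m\|_s^2\le(\lambda_m+1)^{s}$, and the factor $4^m$ is unnecessary). The paper gives no proof of its own here---it quotes the result from \cite[Theorem 3.1]{Littlejohn-Wellman-LD-2002}, noting only that the spaces are built with the spectral theorem---and your self-contained argument follows that same spectral-theorem route.
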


\section{A Left-Definite Characterization of Integral Powers of the Legendre Operator $A$\label{Left-Definite Legendre Results}}

From (\ref{Bounded below}), the Legendre self-adjoint operator $A$, defined in
(\ref{LegendreSA}), is bounded below by $kI$ in $L^{2}(-1,1).$ Consequently, when $k>0$, 
the left-definite theory discussed in the last section applies to $A.$ In
\cite{Everitt-Littlejohn-Wellman-Legendre2002}, the authors explicitly determine the
left-definite spaces $\{H_{n}\}_{n\in\mathbb{N}}$ of
associated with $(L^{2}(-1,1),A)$ for all $n \in \mathbb{N}$. Even though Theorem \ref{E/U LD Theorem} asserts the existence of these left-definite Legendre spaces for all $r>0$, we can only determine these spaces for $n \in \mathbb{N}$ (on account of (5) in Definition \ref{Definition of rth LD space}). 

In \cite[Theorems 5.1, 5.2, 5.3, 5.4]{Everitt-Littlejohn-Wellman-Legendre2002}), the authors prove the following theorem. 

\begin{theorem}
\label{LD_Legendre spaces} For $n\in\mathbb{N},$ the $n^{th}$ left-definite
space $H_{n}=(V_{n},(\cdot,\cdot)_{n})$ associated with $(L^{2}(-1,1),A)$ is
given by%
\begin{eqnarray}
V_{n} &=&\{ f:(-1,1) \rightarrow \mathbb{C} \vert f \in AC_{\textrm{loc}}^{(n-1)}(-1,1);  \label{V_n_1} \\
&&\qquad \qquad \qquad \qquad (1-t^2)^{j/2}f^{(j)} \in L^2(-1,1)\thinspace (j=0,1,\cdots, n)\} \nonumber \\
&=& \left\{f:(-1,1) \rightarrow \mathbb{C} \vert f \in AC_{\textrm{loc}}^{(n-1)}(-1,1); (1-t^2)^{n/2}f^{(n)} \in L^2(-1,1)\right\}. \label{V_nalt}
\end{eqnarray}
The inner product $(\cdot,\cdot)_{n}$ is explicitly given by
\begin{equation}
(f,g)_{n}=\sum_{j=0}^{n}c_{j}(n,k)\int_{-1}^{1}f^{(j)}(t)\overline{g}%
^{(j)}(t)(1-t^{2})^{j}dt\quad(f,g\in V_{n}) \label{Legendre LD IP I},
\end{equation}
where the coefficients $c_{j}(n,k)$ are defined in (\ref{c_0}) and
(\ref{c_j}). 
Moreover, the Legendre polynomials $\{P_{m}\}_{m=0}^{\infty}$
form a complete orthogonal set in each $H_{n}$ and the Legendre polynomials $\{P_m\}_{m=0}^\infty$ satisfy the orthogonality relationship 
\begin{equation}
(P_m,P_r)_n = \sum_{j=0}^nc_j(n,k)\int_{-1}^{1}\dfrac{d^j(P_m(t)}{dt^j}\dfrac{d^j(P_r(t))}{dt^j}(1-t^2)^jdt=(m(m+1)+k)^j\delta_{m,r}.
\label{LD Orthog}
\end{equation}
In particular, we see from (\ref{V_(2n)}), (\ref{V_n_1}), and (\ref{V_nalt}) that the identity in (\ref{D(A^n) Property}) holds; that is, 
\begin{equation}
\mathcal{D}(A^n)=\{f:(-1,1) \rightarrow \mathbb{C} \vert f \in AC_{\textrm{loc}}^{(2n-1)}(-1,1); (1-x^n)f^{(2n)} \in L^2(-1,1)\}.
\label{DomainA^n}
\end{equation}
\end{theorem}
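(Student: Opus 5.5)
The plan is to prove everything through the uniqueness part of Theorem \ref{E/U LD Theorem}. Let $W_n$ be the space in (\ref{V_n_1}) and equip it with the form $(\cdot,\cdot)_n$ of (\ref{Legendre LD IP I}). Since every $c_j(n,k)>0$ when $k>0$, this form is a genuine inner product. I will show that $(W_n,(\cdot,\cdot)_n)$ agrees with the space $H_n=(\mathcal{D}(A^{n/2}),(A^{n/2}\cdot,A^{n/2}\cdot))$ of Theorem \ref{E/U LD Theorem}. Rather than checking the five conditions of Definition \ref{Definition of rth LD space} directly, I will use the polynomials as a common dense core. This avoids having to know $\mathcal{D}(A^n)\subset W_n$ in advance, which would otherwise be circular.

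\textbf{Step 1: the two descriptions (\ref{V_n_1}) and (\ref{V_nalt}) coincide.} I would argue by descending induction on $j$. Assume $(1-t^2)^{j/2}f^{(j)}\in L^2(-1,1)$ and write
\[
f^{(j-1)}(t)=f^{(j-1)}(0)+\int_0^t f^{(j)}(s)\,ds .
\]
Near $t=1$, put $x=1-t$.
\begin{itemize}
\item For $j\ge2$, the weighted Hardy inequality $\int x^{a-2}|F|^2\le C\int x^{a}|F'|^2$ with $a=j\neq1$ (the tail version, integrating from the interior point $0$) controls $\int x^{j-2}|f^{(j-1)}|^2$. On a bounded interval this dominates $\int x^{j-1}|f^{(j-1)}|^2$.
\item For $j=1$, Cauchy--Schwarz gives $|f(t)|\le C+\|(1-s^2)^{1/2}f'\|\,|\ln(1-t)|^{1/2}$, and the right-hand side is square integrable.
\end{itemize}
The endpoint $t=-1$ is handled symmetrically. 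These estimates are quantitative, so they also show that $(\cdot,\cdot)_n$ is equivalent to the norm built from the top term plus finitely many point values.

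\textbf{Step 2: completeness and polynomial identities.} Completeness of $W_n$ is standard. A Cauchy sequence has $(1-t^2)^{j/2}f_m^{(j)}$ Cauchy in $L^2$ for each $j$, hence converges locally uniformly in $C^{n-1}$ on compact subintervals, and the limit lies in $W_n$. For polynomials $p,q$, I would integrate by parts $j$ times in each term of $(p,q)_n$. The boundary terms vanish because of the factors $(1-t^2)^j$, and (\ref{nth_power}) then gives
\[
(p,q)_n=(\ell^n[p],q)_{L^2(-1,1)}=(A^np,q)_{L^2(-1,1)}=(A^{n/2}p,A^{n/2}q)_{L^2(-1,1)} .
\]
Applying this to $P_m,P_r$ together with $AP_m=(m(m+1)+k)P_m$ yields the orthogonality relation (\ref{LD Orthog}), with exponent $n$.

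\textbf{Step 3 (main obstacle): polynomials are dense in $W_n$.} Given $f\in W_n$, set $g=f^{(n)}$, so that $g\in L^2((-1,1);(1-t^2)^n)$. I would expand $g$ in the Gegenbauer polynomials orthogonal for this weight and truncate to get polynomials $g_N$ with $g_N\to g$ in that space. Then define $p_N$ by integrating $g_N$ $n$ times from $0$, matching the values $f^{(i)}(0)$ for $i<n$. By Step 1, the weighted norms of all lower derivatives of $f-p_N$ are controlled by the top-order term plus the (zero) initial data, so $p_N\to f$ in $W_n$.

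\textbf{Step 4: conclusion.} The Legendre polynomials span a core for $A^{n/2}$, since they form a complete eigenbasis. Hence $H_n$ is the completion of the polynomials in the norm $\|A^{n/2}\cdot\|$. By Steps 2 and 3, $W_n$ is the completion of the same polynomials in the same norm. Both spaces embed continuously in $L^2(-1,1)$: for $W_n$ this follows from the $j=0$ term, since $c_0(n,k)=k^n>0$. Therefore $W_n=V_n$ with equal inner products, which proves (\ref{V_n_1}), (\ref{V_nalt}) and (\ref{Legendre LD IP I}). Completeness of $\{P_m\}$ in $H_n$ follows from Step 3.

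Finally, (\ref{V_(2n)}) gives $\mathcal{D}(A^n)=V_{2n}$. Combining this with (\ref{V_nalt}) for index $2n$ yields (\ref{DomainA^n}), in the form $(1-t^2)^n f^{(2n)}\in L^2(-1,1)$ together with $f\in AC^{(2n-1)}_{\mathrm{loc}}(-1,1)$.
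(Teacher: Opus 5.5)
Your proposal is correct. It takes a different route from the paper mainly because the paper proves very little of this theorem itself. It imports from \cite{Everitt-Littlejohn-Wellman-Legendre2002} the identification of $(V_n,(\cdot,\cdot)_n)$ as the left-definite space, the completeness of $\{P_m\}$ in $H_n$, and (\ref{LD Orthog}). Its own argument covers only the equality of (\ref{V_n_1}) and (\ref{V_nalt}), in Theorem \ref{Char. of V_n}.

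\textbf{Step 1.} This is essentially the paper's argument. With power weights, the Chisholm--Everitt criterion of Theorem \ref{Integral Inequality Theorem} is exactly the weighted Hardy inequality, and the paper also descends one derivative at a time from the interior point $0$. Your variants are harmless: a pointwise Cauchy--Schwarz bound at $j=1$, and the stronger weight $x^{j-2}$ instead of $x^{j-1}$ for $j\ge 2$.

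\textbf{Steps 2--4.} These replace the citation, and here the routes genuinely differ.
\begin{itemize}
\item You need Step 1 in quantitative form: $\|F\|_n$ controlled by $\int(1-t^2)^n|F^{(n)}|^2$ and the values $F^{(i)}(0)$, $i<n$. Theorem \ref{Integral Inequality Theorem} does supply this, since it produces bounded operators, but Theorem \ref{Char. of V_n} states only a set inclusion.
\item That estimate is what makes polynomial density work when you approximate $f^{(n)}$ by Gegenbauer partial sums in $L^2((-1,1);(1-t^2)^n)$ and integrate back. This is the sense in which Remark \ref{V_n comment} says the equivalence is embedded in the completeness proof.
\item Your closing step identifies $W_n$ with $\mathcal{D}(A^{n/2})$ via two Hilbert spaces with the same data. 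Each is continuously and injectively embedded in $L^2(-1,1)$, and each contains the polynomials densely with $\|p\|_n=\|A^{n/2}p\|$.
\item This argument bypasses conditions (2) and (5) of Definition \ref{Definition of rth LD space}. Checking those directly would need $\mathcal{D}(A^n)\subset W_n$ and a Dirichlet-type formula with vanishing boundary terms for non-polynomial $f$.
\end{itemize}

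\textbf{What each route buys.} The paper's route is short, given the citation. Yours is self-contained; beyond the paper it uses only (\ref{nth_power}), the Legendre eigenbasis, and completeness of polynomials in $L^2((-1,1);(1-t^2)^n)$.

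You are also right on three readings of the statement. One must take $k>0$, since for $k=0$ the form is degenerate. The exponent in (\ref{LD Orthog}) should be $n$. The condition in (\ref{DomainA^n}) should be $(1-t^2)^n f^{(2n)}\in L^2(-1,1)$.
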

$\square$

In \cite[Section 7.4]{Triebel} (see also \cite{Tr70} and \cite{Tr70a}), Triebel determines $\mathcal{D}(A^r)$ for \underline{all} $r>0$. His remarkable results go beyond the work in \cite{Everitt-Littlejohn-Wellman-Legendre2002}. In particular, he establishes
\[
\mathcal{D}(A^n)=V_n \quad(n \in \mathbb{N}),
\]
where $V_n$ is defined in (\ref{V_n_1}). His interests in powers of $A^r$ were, however, domain-related only. When $r=n \in \mathbb{N}$, Triebel makes no mention of the form of $A^n$ which is given in (\ref{nth_power}). In fact, the form of $A^r$ when $r>0$ but $r \notin \mathbb{N}$ remains open.

We will give a new proof of (\ref{DomainA^n}) - one of our two main results in this paper given in Section \ref{Main Results}. The new proof is simpler than that given in \cite{Everitt-Littlejohn-Wellman-Legendre2002} and follows from applying a well-known integral inequality that we discuss in the next section. This integral inequality will also be used to prove our other main result. Indeed, in Section \ref{Main Results}, we show that functions in $\mathcal{D}(A^n)$ have considerable smoothness properties, generalizing earlier work of Everitt, Littlejohn, and Mari\'{c} (see \cite{ELM-Legendre}) for the second-order case and Littlejohn and Wicks (see \cite{Littlejohn-Wicks}) for the fourth-order case. More specifically, we show that when $f \in \mathcal{D}(A^n)$, then $f^{(n)} \in L^2(-1,1)$. Additionally, as we will see, this is an optimal result in the sense that there exists $g \in \mathcal{D}(A^n)$ with $g^{(n+1)} \notin \mathcal{D}(A^n)$.

\begin{remark}
\label{V_n comment} The equivalence of (\ref{V_n_1}) and (\ref{V_nalt}) is
surprising. The proof is embedded in our proof that the Legendre polynomials are complete in each $H_n$ (\cite[Theorem 5.2]{Everitt-Littlejohn-Wellman-Legendre2002}).
\end{remark}
\begin{remark}
The inner product in (\ref{Legendre LD IP I}) is generated from the inner product $(\ell^n[f],g)_{L^2(-1,1)}$ (as required by (5) in Definition (\ref{Definition of rth LD space})). In fact, for polynomials $p$ and $q$ it is straightforward to show, via integration by parts, that
\begin{equation}
(\ell^n[p],q)_{L^2(-1,1)}=\sum_{j=0}^nc_j(n,k)\int_{-1}^1 (1-t^2)^jp^{(j)}(t)\bar{q}^{(j)}(t)dt;
\end{equation}
that is to say,
\begin{equation}
(\ell^n[p],q)_{L^2(-1,1)}=(p,q)_n. \label{LD-poly}
\end{equation}
\end{remark}
The reader can see that the characterization of $\mathcal{D}(A^n)$ in (\ref{DomainA^n}) is quite different from the classical GKN characterization of $\mathcal{D}(A^n)$ given by (\ref{LegendreGKN}). Indeed, to check whether a function $f$ belongs to $\mathcal{D}(A^n)$, we need only check one integrability condition instead of $2n$ boundary conditions using the sesquilinear form (\ref{symplectic2n}). 

\section{An Integral Inequality\label{Integral Inequality}}
Key to our two main results in this paper is the following integral inequality which can be found in \cite{Chisholm-Everitt1971} (see also  \cite{Muckenhoupt1972}, \cite{Talenti1969}, and \cite{Tomaselli1969}). 

\begin{theorem}
\label{Integral Inequality Theorem} Let $(a,b)$ be an open interval of the
real line, bounded or unbounded, and suppose $w(t)>0$ (a.e. $t\in(a,b)$) is
Lebesgue measurable. In addition, suppose $\varphi,\psi:(a,b)\rightarrow
\mathbb{C}$ satisfy the conditions

\begin{enumerate}
\item[(i)] $\varphi,\psi:(a,b)\rightarrow\mathbb{C}\in L_{\mathrm{loc}}%
^{2}(a,b);$

\item[(ii)] there exists $c\in(a,b)$ such that $\varphi\in L^{2}((a,c];w)$ and
$\psi\in L^{2}([c,b);w);$

\item[(iii)] for all $[\alpha,\beta]\subset(a,b),$%
\[
\int_{\alpha}^{\beta}\left\vert \varphi(t)\right\vert ^{2}w(t)dt>0\text{ and
}\int_{\alpha}^{\beta}\left\vert \psi(t)\right\vert ^{2}w(t)dt>0.
\]

\end{enumerate}

\noindent Define the linear integral operators $S,T:L^{2}((a,b);w)\rightarrow
L_{\mathrm{loc}}^{2}((a,b);w)$ by%
\[
(Sf)(t)=\varphi(t)\int_{t}^{b}\psi(u)f(u)w(u)du\quad(t\in(a,b);f\in
L^{2}((a,b);w))
\]
and%
\[
(Tf)(t)=\psi(t)\int_{a}^{t}\varphi(u)f(u)w(u)du\quad(t\in(a,b);f\in
L^{2}((a,b);w)).
\]
Moreover, define $K:(a,b)\rightarrow(0,\infty)$ by%
\[
K(t)=\left(  \int_{a}^{t}\left\vert \varphi(u)\right\vert ^{2}w(u)du\right)
^{1/2}\left(  \int_{t}^{b}\left\vert \psi(u)\right\vert ^{2}w(u)du\right)
^{1/2}\quad(t\in(a,b))
\]
and let $K\in\lbrack0,\infty]$ be given by%
\[
K=\sup_{t\in(a,b)}K(t).
\]
Then a necessary and sufficient condition that $S$ and $T$ are bounded
operators from $L^{2}((a,b);w)$ into $L^{2}((a,b);w)$ is that%
\[
0<K<\infty.
\]
Equivalently, $K(t)$ is bounded on $(a,b).$ $\square$
\end{theorem}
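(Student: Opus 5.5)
The plan is to prove sufficiency by a weighted Cauchy--Schwarz (Hardy-type) argument with an auxiliary weight, and necessity by testing $T$ on a truncated copy of $\overline{\varphi}$. The two operators are handled together through duality. Throughout I write
\[
\Phi(t)=\int_a^t|\varphi(u)|^2w(u)\,du,\qquad \Psi(t)=\int_t^b|\psi(u)|^2w(u)\,du,
\]
so that $K(t)=\Phi(t)^{1/2}\Psi(t)^{1/2}$.

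By (i) and (ii), $\Phi$ and $\Psi$ are finite on $(a,b)$. I am reading the local integrability in (i) as local integrability with respect to $w$; if that reading fails, this is the first technical point to repair. By (iii), $\Phi$ and $\Psi$ are strictly positive and monotone, with $\Phi$ absolutely continuous and $\Phi'=|\varphi|^2w$ a.e.

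First I reduce $S$ to $T$. Using Fubini on the triangle $\{a<u<t<b\}$, the formal adjoint in $L^2((a,b);w)$ of $T$ is
\[
f\mapsto \overline{\varphi}(t)\int_t^b\overline{\psi}(u)f(u)w(u)\,du .
\]
This is an operator of type $S$ with $\varphi,\psi$ replaced by their conjugates, and that replacement leaves $K(t)$ unchanged. So $S$ is bounded if and only if $T$ is, with the same norm, and it suffices to treat $T$. Alternatively one can run the mirror-image argument directly on $S$.

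Sufficiency ($K<\infty \Rightarrow T$ bounded). For $f\in L^2((a,b);w)$ I apply Cauchy--Schwarz with the splitting weight $\Phi^{\mp1/4}$:
\[
|Tf(t)|^2\le|\psi(t)|^2\Big(\int_a^t|\varphi|^2w\,\Phi^{-1/2}\Big)\Big(\int_a^t|f|^2w\,\Phi^{1/2}\Big)=2\,|\psi(t)|^2\Phi(t)^{1/2}\int_a^t|f(u)|^2w(u)\Phi(u)^{1/2}du.
\]
The identity in the middle uses $\int_a^t\Phi'\Phi^{-1/2}=2\Phi(t)^{1/2}$. Next I integrate against $w(t)\,dt$ and swap the order of integration by Tonelli, since all integrands are nonnegative. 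I then insert $\Phi(t)^{1/2}\le K\,\Psi(t)^{-1/2}$ and use $\int_u^b|\psi|^2w\,\Psi^{-1/2}=2\Psi(u)^{1/2}$. This gives
\[
\|Tf\|^2\le 4K\int_a^b|f(u)|^2w(u)\,\Phi(u)^{1/2}\Psi(u)^{1/2}du\le 4K^2\|f\|^2 ,
\]
so $\|T\|\le 2K$.

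Necessity ($T$ bounded $\Rightarrow 0<K<\infty$). Fix $s\in(a,b)$ and take $f=\overline{\varphi}\,\chi_{(a,s]}$. This lies in $L^2((a,b);w)$ with $\|f\|^2=\Phi(s)$. For $t>s$ we have $Tf(t)=\psi(t)\Phi(s)$, hence
\[
\|Tf\|^2\ge\Phi(s)^2\Psi(s).
\]
Dividing by $\Phi(s)>0$ yields $K(s)^2\le\|T\|^2$, so $K\le\|T\|<\infty$. Positivity $K>0$ is immediate from (iii).

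I expect the main obstacle to be the measure-theoretic bookkeeping rather than the inequalities themselves. One issue is the chain-rule identities for $\Phi^{1/2}$ and $\Psi^{1/2}$, which need absolute continuity and positivity of $\Phi,\Psi$ away from the endpoints; near $a$ they hold as limits, since $\Phi(a^+)=0$. The other is confirming that $Tf$ and $Sf$ are well defined, that is, that the inner integrals converge, which uses (ii) together with Cauchy--Schwarz on each half-interval. I would settle these first and then carry out the two estimates above.
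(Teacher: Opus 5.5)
Your proof is correct. The paper gives no proof of this theorem; it quotes it from Chisholm and Everitt, with pointers to Muckenhoupt, Talenti and Tomaselli. So the only comparison available is with that literature.

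Your sufficiency estimate is the $p=2$ case of Muckenhoupt's proof of the weighted Hardy inequality: Cauchy--Schwarz with the splitting factor $\Phi^{\mp1/4}$, then Tonelli, then the bound $\Phi^{1/2}\le K\Psi^{-1/2}$. Testing $T$ on $\overline{\varphi}\,\chi_{(a,s]}$ is the standard necessity argument.

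Compared with the bare citation, your write-up is self-contained. It also gives the two-sided estimate $K\le\|T\|=\|S\|\le 2K$, which the statement does not record. The paper only ever uses the sufficiency half, and always with $w\equiv1$.

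Two small remarks. First, your reading of (i) as local square-integrability with respect to $w$ is the right one. It is exactly what makes $K(t)$ finite and $S$, $T$ defined on all of $L^{2}((a,b);w)$, and it costs nothing in the paper's applications since $w\equiv 1$ there. Second, the duality step needs the usual justification. Apply Fubini first to $f,g$ with compact support in $(a,b)$, where absolute integrability follows from the local square-integrability of $\varphi,\psi$. Then pass to general $f$ by dominated convergence and Fatou. Alternatively, as you note, simply run the mirror-image estimate on $S$ directly.
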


\section{Main Results\label{Main Results}}

In this section, we give a new proof (Theorem \ref{Char. of V_n}) of $V_n = V_n'$, where $V_n$ and $V_n'$ are given, respectively, in (\ref{V_n_1}) and (\ref{V_nalt}) (and below in Theorem \ref{Char. of V_n}). The equality of these two sets yields the simple characterization of $\mathcal{D}(A^n)$ given in (\ref{D(A^n) Property}). In this section, we also establish optimal smoothness for functions $f \in \mathcal{D}(A^n)$; see Theorem \ref{Main Theorem} below.

\begin{theorem} \label{Char. of V_n} 
Let $n \in \mathbb{N}$. We recall the definition of $V_n$ in (\ref{V_n_1}):
\begin{align}
\begin{split}
V_n = \{f:(-1,1)\rightarrow \mathbb{C}\vert &f \in AC_{\textrm{loc}}^{(n-1)}(-1,1);\\
&(1-x^2)^{j/2}f^{(j)} \in L^2(-1,1) \quad (j=0,1,\cdots,n)\}.
\end{split}   
\label{V_n def}
\end{align}
Let (see (\ref{V_nalt}))
\begin{equation}
V_n'=\{f:(-1,1)\rightarrow \mathbb{C}\vert f \in AC_{\textrm{loc}}^{(n-1)}(-1,1); (1-x^2)^{n/2}f^{(n)} \in L^2(-1,1)\}.
\end{equation}
Then $V_n=V_n'$.
\end{theorem}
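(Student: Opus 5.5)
The inclusion $V_n\subset V_n'$ is trivial; it is the case $j=n$ of the conditions defining $V_n$. For the reverse inclusion I would show that $(1-x^2)^{j/2}f^{(j)}\in L^2(-1,1)$ for $j=n-1$ whenever $f\in V_n'$. Then I descend by induction on $j$ from $n-1$ down to $0$. At each stage only the single condition $(1-x^2)^{(j+1)/2}f^{(j+1)}\in L^2(-1,1)$ and local absolute continuity of $f^{(j)}$ are used. So everything reduces to one one-step lemma, applied $n$ times:
\[
g\in AC_{\textrm{loc}}(-1,1),\ (1-x^2)^{(j+1)/2}g'\in L^2(-1,1)\ \Longrightarrow\ (1-x^2)^{j/2}g\in L^2(-1,1)\quad (j\ge 0).
\]

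To prove the lemma I would split $(-1,1)$ at $0$ and treat each half separately; by symmetry it suffices to consider $[0,1)$. Write $h:=(1-x^2)^{(j+1)/2}g'\in L^2(0,1)$. Since $g$ is locally absolutely continuous,
\[
g(x)=g(0)+\int_0^x (1-u^2)^{-(j+1)/2}h(u)\,du .
\]
The constant term is harmless, since $(1-x^2)^{j/2}$ is bounded. For the integral term, set
\[
(Tf)(x)=\psi(x)\int_0^x\varphi(u)f(u)\,du,\qquad \psi(x)=(1-x^2)^{j/2},\quad \varphi(u)=(1-u^2)^{-(j+1)/2},\quad w\equiv1,
\]
on $(a,b)=(0,1)$. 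By Theorem \ref{Integral Inequality Theorem}, $T$ is bounded on $L^2(0,1)$ provided $K(x)$ is bounded. (Hypothesis (ii) there holds: $\varphi$ is square integrable near $0$ and $\psi$ is bounded.)

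Near $x=1$ we have
\[
\int_0^x\varphi^2\sim C(1-x)^{-j}\quad (j\ge1),\qquad \int_0^x\varphi^2\sim C|\ln(1-x)|\quad (j=0),
\]
and
\[
\int_x^1\psi^2\sim C(1-x)^{j+1}.
\]
Hence $K(x)^2\sim C(1-x)$ for $j\ge1$, and $K(x)^2\sim C(1-x)|\ln(1-x)|$ for $j=0$. In both cases $K(x)$ is bounded (indeed it tends to $0$), while near $x=0$ it is trivially bounded. Thus $Th\in L^2(0,1)$, which gives $(1-x^2)^{j/2}g\in L^2(0,1)$. The half $(-1,0]$ is handled identically, integrating from $0$ toward $-1$ with the roles arranged as in the operator $S$ of the theorem (or simply via the substitution $x\mapsto -x$).

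Applying the lemma with $g=f^{(n-1)}$, then $g=f^{(n-2)}$, and so on, gives successively $(1-x^2)^{j/2}f^{(j)}\in L^2$ for $j=n-1,\dots,0$, so $f\in V_n$. The only real work is the weight computation for $K(x)$, and the main place to be careful is the borderline case $j=0$. There the $\varphi$-integral diverges only logarithmically, and one must check that the factor $(1-x)$ from $\psi$ still kills it. Everything else, including the requirement that the weights be positive on compact subintervals, is routine.
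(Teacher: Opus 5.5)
Your proposal is correct and follows essentially the same route as the paper: restrict to $(0,1)$, write $f^{(j)}$ as $f^{(j)}(0)$ plus the integral of $(1-u)^{-(j+1)/2}\cdot\bigl((1-u)^{(j+1)/2}f^{(j+1)}(u)\bigr)$, apply Theorem \ref{Integral Inequality Theorem} with $\varphi$ the reciprocal weight and $\psi$ the target weight, check that $K$ is bounded, and descend one derivative at a time. The only differences are cosmetic: the paper uses the weight $(1-t)$ rather than $(1-t^2)$ on $(0,1)$ and treats the logarithmic step (your case $j=0$) separately as its Case 1, whereas you fold it into a single lemma.
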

\begin{proof}
Clearly we need to only establish $V_n' \subset V_n$. It suffices to prove this theorem with the interval $(-1,1)$ replaced by $(0,1)$; a similar proof holds for the interval $(-1,0)$.
Let $f \in V_n'$.
\medskip
\\\underline{Case 1} $n=1$\\
We need to show 
\begin{equation*}
(1-x)^{1/2}f' \in L^2(0,1) \implies f \in L^2(0,1).
\end{equation*}
Since
\begin{equation*}
f(t)-f(0)= \int_0^t (1-u)^{-1/2}\left((1-u)^{1/2}f'(u)\right) du,
\end{equation*}
we apply Theorem \ref{Integral Inequality Theorem} with
\[
\varphi(t)=(1-t)^{-1/2} \quad\text{and}\quad \psi(t)=1 \quad(t \in (0,1)).
\]
Both $\varphi$ and $\psi$ satisfy the conditions of Theorem \ref{Integral Inequality Theorem}; furthermore a calculation shows
\begin{equation*}
\left(\int_0^t \varphi^2(u)du\right)\left(\int_t^1 \psi^2(u)du\right) =(t-1)\ln(1-t),
\end{equation*}
which is bounded on $(0,1)$.
By Theorem \ref{Integral Inequality Theorem}, $f \in L^2(0,1)$.
\medskip
\\\underline{Case 2} $n>1$ \\
We first show 
\begin{equation}
(1-t)^{n/2}f^{(n)} \in L^2(0,1) \implies (1-t)^{(n-1)/2}f^{(n-1)} \in L^2(0,1).
\label{Inductive step}
\end{equation}
Since
\begin{equation*}
f^{(n-1)}(t)-f^{(n-1)}(0)=\int_0^t (1-u)^{-n/2}\left((1-u)^{n/2}f^{(n)}(u)\right)du, 
\end{equation*}
we see that
\begin{align*}
&(1-t)^{{(n-1)/2}}f^{(n-1)}(t)-(1-t)^{{(n-1)/2}}f(0) \\
=&(1-t)^{(n-1)/2}\int_0^t (1-u)^{-n/2}\left( (1-u)^{n/2}f^{(n)}(u)\right) du.
\end{align*}
Clearly, $(1-t)^{(n-1)/2}f^{(n-1)}(0) \in L^2(0,1)$.
Let 
\[
\varphi(t)=(1-t)^{-n/2} \quad\text{and}\quad \psi(t)=(1-t)^{(n-1)/2} \quad (0<t<1);
\]
both functions satisfy the conditions of Theorem \ref{Integral Inequality Theorem}. Moreover, for $t \in (0,1)$,
\begin{align*}
&\left(\int_0^t \varphi^2(u)du\right)\left(\int_t^1 \psi^2(u)du\right)\\
&=\left(\int_0^t (1-u)^{-n}du\right)\left(\int_t^1(1-u)^{n-1}du\right)
=\dfrac{(1-t)(1-(1-t)^{n-1}}{n(n-1)},
\end{align*}
which is bounded on $(0,1)$.\\
It follows that
\begin{equation*}
(1-t)^{(n-1)/2}f^{(n-1)} \in L^2(0,1),
\end{equation*}
establishing (\ref{Inductive step}).\\
By repeating this argument, we see that for $0 \le j \le n$,
\begin{equation}
(1-t)^{n/2}f^{(n)} \in L^2(0,1) \implies (1-t)^{(n-j)/2}f^{(n-j)} \in L^2(0,1). \label{Implication}
\end{equation}
Replacing $j$ by $n-j$ in (\ref{Implication}) shows
\begin{equation*}
(1-t)^{n/2} \in L^2(0,1) \implies (1-t)^{j/2}f^{(j)} \in L^2(0,1) \quad \text{for} \quad 0 \le j \le n.
\end{equation*}
This completes the proof of the theorem.
\end{proof}

In particular, this theorem yields a new proof of the identity in (\ref{Domain(A^n)}) (or \ref{DomainA^n}); that is,  
\begin{align}
\begin{split}
V_{2n}&=\mathcal{D}(A^n)\\
&=\{f:(-1,1) \rightarrow \mathbb{C} \vert f \in AC_{\textrm{loc}}^{(2n-1)}(-1,1); (1-x^2)^{n}f^{(2n)} \in L^2(0,1)\}.
\end{split}
\label{Domain of A^n}
\end{align}

In the above proof, we note that we `de-incremented' powers of $(1-x^2)^{n/2}$ by integral multiples of $1/2$. 
In the next theorem, which establishes optimal smoothness conditions for $f \in \mathcal{D}(A^n)$, we de-increment powers of $(1-x^2)^{n}$ by integral multiples of $1$.

\begin{theorem}
\label{Main Theorem}For $n\in\mathbb{N},$ let $f\in\mathcal{D}(A^{n}),$ where
$A$ is the Legendre operator defined in (\ref{LegendreSA}) and where
$\mathcal{D}(A^{n})$ is given as in (\ref{Domain of A^n}). Then

\begin{enumerate}
\item[(i)]
\begin{equation}
f\in\mathcal{D}(A^{n})\Longrightarrow f^{(n)}\in L^{2}(-1,1);
\label{Legendre_main_result}%
\end{equation}

\item[(ii)] $f,f^{\prime},\ldots f^{(n-1)}\in AC[-1,1];$

\item[(iii)] the implication in (\ref{Legendre_main_result}) is best possible in the sense that there exists $g\in\mathcal{D}(A^{n})$ such that $g^{(n+1)}\notin
L^{2}(-1,1).$
\end{enumerate}

\begin{remark}
As mentioned earlier, the case $n=1$ was established in \cite{ELM-Legendre} and the case $n=2$ was shown in \cite{Littlejohn-Wicks} and \cite{Wicks}.
\end{remark}

\begin{proof}
Fix both $n\in\mathbb{N}$ and $f\in\mathcal{D}(A^{n}).$ Then, from
(\ref{Domain of A^n}),%
\[
(1-t^{2})^{n}f^{(2n)}\in L^{2}(-1,1).
\]
For part (i), it suffices to prove $f^{(n)}\in L^{2}(0,1);$ a similar argument
shows $f^{(n)}\in L^{2}(-1,0).$ Furthermore, observe
\[
(1-t^{2})^{n}f^{(2n)}\in L^{2}(0,1)\Longleftrightarrow(1-t)^{n}f^{(2n)}\in
L^{2}(0,1).
\]
Consequently, we first prove the implication
\begin{equation*}
(1-t)^{n}f^{(2n)}\in L^{2}(0,1)\Rightarrow(1-t)^{n-1}f^{(2n-1)}\in L^{2}(0,1).
\label{Legendre_main_step}%
\end{equation*}
It will follow, by iteration, that%
\begin{equation*}
(1-t)^{n-j}f^{(2n-j)}\in L^{2}(0,1)\quad(j=0,1,\ldots n),
\label{Legendre_main_implication}%
\end{equation*}
and, in particular with $j=n,$%
\begin{equation*}
f^{(n)}\in L^{2}(0,1). \label{Legendre_main_result_[0,1]}%
\end{equation*}
One integration shows%
\begin{align*}
(1-t)^{n-1}f^{(2n-1)}(t)  &  =(1-t)^{n-1}f^{(2n-1)}(0)\\
&  +(1-t)^{n-1}\int_{0}^{t}\frac{1}{(1-u)^{n}}\left(  (1-u)^{n}f^{(2n)}%
(u)\right)  du
\end{align*}
so it suffices to prove%
\begin{equation}
(1-t)^{n-1}\int_{0}^{t}\frac{1}{(1-u)^{n}}\left(  (1-u)^{n}f^{(2n)}(u)\right)
du \in L^2(0,1). \label{Legendre-key_step}%
\end{equation}
To this end, we apply Theorem \ref{Integral Inequality Theorem} with
$(1-t)^{n}f^{(2n)}\in L^{2}(0,1),\varphi(t)=(1-t)^{-n}$, $\psi(t)=(1-t)^{n-1}$
and $w\equiv1$ on $(a,b)=(0,1).$ Both $\varphi$ and $\psi$ satisfy the
conditions in Theorem \ref{Integral Inequality Theorem}. 
Moreover, observe that for $t\in(0,1),$
\begin{align*}
K^{2}(t)  &  =\left(  \int_{0}^{t}\varphi^{2}(u)du\right)  \left(  \int%
_{t}^{1}\psi^{2}(u)du\right) \\
&  =\left(  \int_{0}^{t}(1-u)^{-2n}du\right)  \left(  \int_{t}^{1}%
(1-u)^{2n-2}du\right) \\
&  =\left(  \frac{(1-t)^{-2n+1}}{2n-1}-\frac{1}{2n-1}\right)  \left(
\frac{(1-t)^{2n-1}}{2n-1}\right)
\end{align*}
which is clearly bounded on $(0,1)$. 
This proves (\ref{Legendre-key_step}). As
argued above, putting all of the above steps together, we see that
\[
(1-t^{2})^{n-j}f^{(2n-j)}\in L^{2}(-1,1)\quad(j=0,1,\ldots n)
\]
so, when $j=n,$ we obtain $f^{(n)}\in L^{2}(-1,1)$ as claimed. This proves
part (i) of the theorem.
\medskip\newline Part (ii) follows immediately from
(i) and standard results on absolute continuity in real analysis. 
\medskip\newline To prove part
(iii), define $g:[-1,1]\rightarrow\mathbb{C}$ by%
\begin{equation}
g(t)=\left\{
\begin{array}
[c]{ll}%
p(t) & \text{if }-1<t \leq 0\\
(1-t)^{n}\ln(1-t) & \text{if }0<t<1,
\end{array}
\right.
\label{function g}
\end{equation}
where $p(t)$ is any polynomial such that $g\in C^{n}(-1,1)$. 
Calculations show that, for $0 < t <1$, there exists constants $a_j=a_{j,n}$ such that 
\[
g^{(j)}(t)=(-1)^j\dfrac{n!}{(n-j)!}(1-t)^{n-j}\ln(1-t)+a_j(1-t)^{n-j} \quad(j=0,1,\cdots n)
\]
In particular,
\begin{equation*}
g^{(n)}(t) =(-1)^nn!\ln(1-t) +a_{n,n}\quad(t \in (0,1)) \label{nth derivative of g}.
\end{equation*}
so
\begin{equation*}
g^{(n+1)}(t)=\dfrac{(-1)^{n+1}n!}{1-t}\quad(t \in (0,1)).
\end{equation*}
This shows that $g^{(n+1}) \notin L^2(-1,1)$.
Continuing to differentiate we find, for $0<t<1$,
\[
g^{(n+j)}(t)=\dfrac{(-1)^{n+1}n!(j-1)!}{(1-t)^j} \quad(j=0,1,2,\cdots, n)
\]
and, in particular,
\[
g^{(2n)}(t)=\dfrac{(-1)^{n+1}n!(n-1)!}{(1-t)^n}\quad(t \in (0,1)).
\]
Hence, referring to the definition of $g$ in (\ref{function g}), we see that 
\[
(1-t)^ng^{(2n)} \in L^2(-1,1).
\]
Consequently, from (\ref{Domain of A^n}), $g\in \mathcal{D}(A^n)$. This completes the proof.
\end{proof}
\end{theorem}

\section{Concluding Remarks\label{Concluding Remarks}}
To summarize, we have two substantially different, but equivalent, characterizations of $\mathcal{D}(A^n)$, when $n \in \mathbb{N}$, of the $n^{th}$ composite power of the Legendre operator $A$ defined in (\ref{Operator A}). 

Indeed, the GKN theory yields
\begin{align}
\begin{split}
\mathcal{D}(A^n)=\{f:(-1,1) \rightarrow \mathbb{C} \vert &f,f',\cdots,f^{(2n-1)} \in AC_{\mathrm{loc}}(-1,1); f,\ell^n[f] \in L^2(-1,1); \\
&[f,g_1]_{2n}=[f,g_2]_{2n}=\cdots=[f,g_{2n}]=0\}
\label{GKN char}
\end{split}
\end{align}
where $[\cdot,\cdot]_{2n}$ is defined in (\ref{symplectic2n}) and (\ref{Legendre Symplectic}) and the functions $\{g_j\}_{j=1}^{2n}$ are given in Remark \ref{GKN sets for Legendre}.

On the other hand, an application of the left-definite theory shows
\begin{align}
\mathcal{D}(A^n)=\{f:(-1,1) \rightarrow \mathbb{C} \mid &f,f',\cdots f^{(2n-1)} \in AC_{\textrm{loc}}(-1,1); \label{LD char}\\
&(1-t^2)^nf^{(2n)} \in L^2(-1,1)\} \notag.
\end{align}
To check whether or not a certain function $f$ belongs to $\mathcal{D}(A^n)$, we remark that (\ref{LD char}) seems more efficient than applying the GKN approach leading to (\ref{GKN char}). We also established optimal and best-possible smoothness conditions for $f \in \mathcal{D}(A^n)$.

\end{document}